\documentclass[11pt,twoside]{article}

\usepackage{amsmath, amsthm,amsfonts,amssymb,mathtools,mathrsfs,bbold}
\usepackage[utf8]{inputenc}
\usepackage[T1]{fontenc}
\usepackage{mlmodern, tikz-cd, quiver}

\usepackage{enumitem}
\usepackage[hidelinks]{hyperref}
\usepackage{tikz, fancyhdr, xparse, xcolor, tocloft}
\usepackage[british]{babel}
\usepackage[a4paper, top=4.5cm, bottom=4.5cm, left=4cm, right=4cm, asymmetric]{geometry}

\usepackage{ahtitle, titlesec}
\usepackage{etoolbox}
\makeatletter
\patchcmd{\ttlh@hang}{\parindent\z@}{\parindent\z@\leavevmode}{}{}
\patchcmd{\ttlh@hang}{\noindent}{}{}{}
\makeatother

\newcommand\eqdef{\coloneqq}
\newcommand\nbd{\nobreakdash-\hspace{0pt}}
\newcommand\idd[1]{\mathrm{id}_{#1}}
\newcommand\incl{\hookrightarrow}
\newcommand\set[1]{\left\{ {#1} \right\}}
\newcommand{\overbar}[1]{{\mkern 1.5mu\overline{\mkern-1.5mu#1\mkern-1.5mu}\mkern 1.5mu}}
\newcommand\filtr[2]{#2^{(#1)}}
\newcommand\NN{\mathbb{N}}
\newcommand\ZZ{\mathbb{Z}}
\newcommand\size[1]{\left|{#1}\right|}

\newcommand\cat[1]{\underline{\mathrm{#1}}}
\newcommand\fun[1]{\mathsf{#1}}
\newcommand{\Set}{\cat{Set}}
\newcommand{\Cat}{\cat{Cat}}
\newcommand\sSet{\cat{sSet}}
\newcommand{\omegaCat}{\omega\cat{Cat}}
\newcommand\cp[1]{\,{\scriptstyle\#}_{#1}\,}   
\newcommand\oper[1]{\mathcal{#1}} 	
\DeclareMathOperator{\Nd}{Nd}
\newcommand\bd[2]{\partial_{#1}^{#2}}
\newcommand\skel[1]{\sigma_{\leq {#1}}}
\newcommand\trunc[1]{\tau_{\leq {#1}}}
\newcommand\afib{\overset{\sim}{\to}}
\newcommand\sus[1]{\fun{S}{#1}}

\newcommand\globe[1]{O^{#1}}               

\newcommand{\FF}{\mathbb{F}_2}
\newcommand{\NCPop}{\oper{NCP}}         
\newcommand{\NCP}{\mathrm{NCP}}
\newcommand{\PNCPop}{\oper{NCP}_{\FF}}
\newcommand{\PNCP}{\mathrm{NCP}_{\FF}}  
\newcommand{\Bop}{\oper{B}}             
\newcommand{\pcirc}{\mathbin{\circ}}    
\newcommand{\unit}{\mathbb{1}}
\newcommand{\Nshift}[2]{\fun{s}_{#1}^{#2}}	
\newcommand{\levNCP}{\ell_{\NCP}}	
\newcommand{\levB}{\ell_{\mathrm{B}}}		
\newcommand{\lev}{\ell}			
\newcommand{\coarse}{\psi}		

\newcommand{\der}{\mathrm{d}}             
\newcommand{\derbar}{\overbar{\der}}	  
\newcommand{\aug}{\mathrm{e}}             
\newcommand{\chaug}{\cat{Ch}^+}           
\newcommand{\linea}[1]{\fun{\lambda}{#1}} 
\newcommand{\nufun}[1]{\fun{\nu}{#1}}     
\DeclareMathOperator*{\cosk}{cosk}        
\newcommand{\gener}[1]{\mathscr{#1}}      
\newcommand{\NSimp}{\fun{N}}
\newcommand\spanset[1]{\langle {#1} \rangle}
\newcommand\orient{\mathscr{O}}

\renewcommand{\a}{\alpha}
\renewcommand{\b}{\beta}
\newcommand{\lightregion}{gray!20}
\newcommand{\darkregion}{black!90}

\newcommand\omegat{\texorpdfstring{$\omega$}{omega}}

\newcommand\ncht{2}                       
\newcommand\pncp[1]{\,\raisebox{-1ex}{%
  \tikz[x=1.8ex,y=1.2ex,line width=0.7pt,line cap=round]{#1}}\,}
  \NewDocumentCommand\ncstick{O{0} m}{\draw[-] (#2,#1)--(#2,\ncht);}
  \NewDocumentCommand\nccup{O{0} m m}{\draw[-] (#2,\ncht)--(#2,#1)--(#3,#1)--(#3,\ncht);}
  \newcommand\ncpol[1]{\draw[-] (#1,\ncht)--(#1-0.5,\ncht);}
  \newcommand\ncpolfin[1]{\draw[-] (#1+1,\ncht)--(#1+0.5,\ncht);}

\newtheoremstyle{ittheorem}
  {\topsep}   
  {\topsep}   
  {\itshape}  
  {0pt}       
  {\sffamily \itshape \bfseries} 
  { ---}         
  {5pt plus 1pt minus 1pt} 
  {}          

\newtheoremstyle{itdfn}
  {\topsep}   
  {\topsep}   
  {}  
  {0pt}       
  {\sffamily \itshape \bfseries} 
  {}         
  {5pt plus 1pt minus 1pt} 
  {\thmnumber{#2}{\thmnote{\normalfont\ \ %
  {\sffamily(#3)}.}}}          

\newtheoremstyle{itrmk}
  {0.5\topsep}   
  {0.5\topsep}   
  {\normalfont}  
  {0pt}       
  {\sffamily \itshape} 
  { ---}         
  {5pt plus 1pt minus 1pt} 
  {}          
  
\makeatletter
  \renewcommand\@upn{\textit}
\makeatother

\theoremstyle{ittheorem}
\newtheorem{thm}{Theorem}[section]
\newtheorem{prop}[thm]{Proposition}
\newtheorem{lem}[thm]{Lemma}
\newtheorem{conj}[thm]{Conjecture}
\theoremstyle{itdfn}
\newtheorem{dfn}[thm]{}
\theoremstyle{itrmk}
\newtheorem{rmk}[thm]{Remark}
\newtheorem{comm}[thm]{Comment}
\newtheorem{exm}[thm]{Example}
\newtheorem{evid}[thm]{Evidence}

\setlist{leftmargin=20pt,itemsep=0pt,parsep=0pt,topsep=1ex}

\titleformat{\section}
 {\large\scshape}{\thesection.}{1em}{}

\titleformat{\subsection}
 {\normalsize\itshape}{\thesubsection.}{1em}{}
\titlespacing*{\subsection}
{0pt}{1.5ex plus 1ex minus .2ex}{1.5ex plus .2ex}

\renewcommand{\cftsubsecpagefont}{\mdseries}

\makeatletter \renewcommand{\cftsubsecfillnum}[1]{%
  {\cftsubsecleader}\nobreak
  \makebox[\@pnumwidth][\cftpnumalign]{\cftsubsecpagefont \oldstylenums{#1}}\cftsubsecafterpnum\par
} \makeatother
\makeatletter \renewcommand{\cftsecfillnum}[1]{%
  {\cftsecleader}\nobreak
  \makebox[\@pnumwidth][\cftpnumalign]{}\cftsecafterpnum\par
} \makeatother

\AtBeginDocument{\addtocontents{toc}{\protect\thispagestyle{fancy}}}

\newcommand\runtitle{polarised ncps and the self-dual \omegat-equivalence}
\newcommand\runauthor{hadzihasanovic}

\title{Polarised noncrossing partitions 
\\and the coherent self-dual \omegat-equivalence}

\author{Amar Hadzihasanovic}

\institution{Tallinn University of Technology}

\begin{document}

{$\quad$}

\vspace{20pt}

\maketitle

\noindent\makebox[\textwidth][r]{%
	\begin{minipage}[t]{.7\textwidth}
\small \emph{Abstract.}
We construct an acyclic augmented chain complex of abelian groups whose entry in degree $n > 0$ is free on the set of noncrossing partitions of degree $n-1$ equipped with a $\set{0, 1}$-labelling of their gaps.
The definition of the differential in this complex is related, via a restricted Leibniz rule, to the gap-insertion operad of Ebrahimi-Fard, Foissy, Kock, and Patras.
We conjecture that this augmented chain complex is the linearisation of a polygraph presenting a \emph{self-dual} model of the coherent walking $\omega$\nbd equivalence constructed by the author, Loubaton, Ozornova, and Rovelli, and provide evidence for this conjecture.
\end{minipage}}

\vspace{20pt}

\makeaftertitle

\normalsize \thispagestyle{empty}

\section*{Introduction}

\noindent
Part of any sufficiently structured model of higher categories is a specification of what it means for two objects in a higher category to be \emph{equivalent} \cite{ozornova2024equivalence}.
In models of homotopy-theoretic provenance, the natural answer is often extrinsic: two objects are equivalent if they are connected by a path in some underlying space.
In algebraic models, a most natural answer---definable in the algebra of cells, units, and composition---is provided by the notion of \emph{$\omega$\nbd equivalence}, also known as pseudoinvertibility or weak invertibility \cite{cheng2007omega}.
This postulates that two objects $s, t$ are equivalent if they admit a pair of morphisms $f\colon s \to t$, $g\colon t \to s$, such that the composite $gf$ is itself equivalent to $\idd{s}$ and the composite $fg$ is equivalent to $\idd{t}$ in the higher categories $\mathrm{hom}(s, s)$, $\mathrm{hom}(t, t)$, respectively.
More in general, there are ``truncated'' notions of \emph{$(n-1)$\nbd equivalence} between $n$\nbd categories for each $n \in \NN$. 
This notion of equivalence has recently received renewed attention both in strict \cite{hadzihasanovic2025model} and in weak \cite{fujii2024weakly, benjamin2024invertible, chanavat2025equivalences} models of higher categories.

For computational purposes, it is useful to conceive of weak invertibility as \emph{structure} on a morphism; that is, consider that an $\omega$\nbd equivalence $f\colon s \to t$ comes equipped with a weak inverse $g\colon t \to s$ and higher cells witnessing the inversion.
In particular, it is desirable that such data can be ``packaged'' in a higher category---the free higher category containing an $\omega$\nbd equivalence, also known as \emph{the walking $\omega$\nbd equivalence}
---since this provides an explicit construction for the \emph{localisation} of a higher category at a morphism: to localise, attach a copy of the walking $\omega$\nbd equivalence.

As discussed in \cite{lack2004quillen} in the case of equivalences of 2-categories, in order for such an invertibility structure to be essentially unique, it is essential that the walking equivalence be \emph{contractible} in the appropriate homotopy theory.
However, already in the setting of \emph{strict} $\omega$\nbd categories---which can be construed as a ``toy model'' of higher category theory, with the advantage that more computational tools are currently available compared to fully weak and homotopical models---the ``naive'' version of the walking $\omega$\nbd equivalence fails to be contractible in the appropriate homotopical sense, as provided by the folk model structure of Lafont, M\'etayer, and Worytkiewicz \cite{lafont2010folk}.
In fact, already the ``naive walking equivalence'' of 2\nbd categories is not contractible \cite[Section 1.2.4]{ozornova2024equivalence}.

In \cite[Section 1.5.4]{ozornova2024equivalence}, Ozornova and Rovelli proposed a candidate model for the \emph{coherent} (that is, contractible) walking $\omega$\nbd equivalence of strict $\omega$\nbd categories, also inducing models of coherent walking $(n-1)$\nbd equivalences of strict $n$\nbd categories. 
This was eventually proven to be contractible by Ozornova and Rovelli jointly with the author and Loubaton in \cite{hadzihasanovic2025model}.

While this solved the question from a homotopical standpoint, from a \emph{computational} standpoint this model has a downside.
The notion of $\omega$\nbd equivalence is manifestly symmetric: if $s$ is equivalent to $t$, then $t$ is equivalent to $s$.
More graphically, one would imagine an equivalence between $s$ and $t$ to be something like an ``undirected'' edge, a path that can be traversed both ways; so one would expect that the data of an equivalence should be \emph{self-dual} under the reversal of $s$ and $t$.
However, the Ozornova--Rovelli model is asymmetric: it dodges the non-coherence problem by separately assigning both a \emph{left} inverse $g^L$ and a \emph{right} inverse $g^R$ to $f$.

The naive walking $\omega$\nbd equivalence is self-dual, but not coherent.
A coherent self-dual model of the walking 1\nbd equivalence exists: it is the \emph{walking adjoint equivalence}.
There is also a conjecturally coherent self-dual model of walking 2\nbd equivalence, due to Gurski \cite{gurski2012biequivalences}.
To our knowledge, no self-dual coherent models for higher dimensions appear in the literature.

This article aims to be a first step towards the construction of an explicit self-dual model of the coherent $\omega$\nbd equivalence.
The initial difficulty in building such a model is simply to come up with an enumeration of the necessary coherence cells in each dimension.
The article presents an \emph{ansatz} for the combinatorics underlying such an enumeration, connecting them to objects that we call \emph{polarised noncrossing partitions}: these are noncrossing partitions \cite{kreweras1972partitions} together with a labelling of their \emph{gaps} in a 2\nbd element set, say, $\set{0, 1}$; that is, pairs $(p, b_1 \cdots b_{n+1})$ of a noncrossing partition of degree $n$ and a string of $n+1$ bits.

We do not know where the connection comes from, as it was largely reverse-engineered:
we first enumerated plausible ``minimal'' coherences up to dimension 5, and noticed that the sequence of their numbers matched the sequence $2^n C_{n-1}$ for $n > 0$, where $C_k$ is the $k$\nbd th Catalan number.
Of course, Catalan numbers are associated to many combinatorial structures, so why noncrossing partitions?
The key observation is that noncrossing partitions carry an operadic structure---the \emph{gap-insertion operad} of Ebrahimi-Fard, Foissy, Kock, and Patras \cite{ebrahimi2020operads}---and that it can be extended to polarised noncrossing partitions.
Using this structure, we can express each polarised noncrossing partition in a canonical form as a composite of certain generating elements, what we call its \emph{rightmost decomposition}.
Defining a differential on the generators, then extending along rightmost decompositions with a Leibniz-like rule, we can build an \emph{augmented chain complex} $E_\bullet$ whose entries in degree $n > 0$ are free on the set of polarised noncrossing partitions of degree $n-1$.

We know from Steiner theory \cite{steiner2004omega, ara2020joint} that a polygraph $X$---a strict $\omega$\nbd category that is dimension-wise freely generated by a set of cells \cite{ara2025polygraphs}---induces an augmented chain complex $\linea{X}_\bullet$ which in degree $n$ is free on the set of generating $n$\nbd cells of $X$; the differential applied to a generating $n$\nbd cell $x$ yields the sum of generating $(n-1)$\nbd cells appearing in the \emph{target} of $x$, minus the sum of generating $(n-1)$\nbd cells appearing in the \emph{source} of $x$, counted with their multiplicity.

It so happens that, in low dimensions, $E_\bullet$ reproduces exactly the linearisation of the coherence cells that we constructed by hand.
Moreover, $E_\bullet$ satisfies several conditions that are necessary or expected for such a linearisation:
\begin{enumerate} 
	\item it is self-dual, that is, it has a non-trivial involutive automorphism exchanging the two generators in degree 0 (Proposition \ref{prop:chain_selfdual}); 
	\item it is acyclic, that is, contractible in the sense appropriate for chain complexes (Theorem \ref{thm:acyclicity});
	\item it admits a filtration of acyclic subcomplexes $\filtr{k}{E}_\bullet$, $k \in \NN$, that for $k = 0, 1$ we can lift to explicit polygraphs $k E$ such that $\linea{(k E)}_\bullet$ is isomorphic to $\filtr{k}{E}_\bullet$, and such that $1 E$ is related to the ``walking isomorphism'' category.
\end{enumerate}
We thus conjecture that $E_\bullet$ is the linearisation of a model $\omega E$ of the coherent self-dual $\omega$\nbd equivalence, and, more generally, $\filtr{n}{E}_\bullet$ is the linearisation of a strict $\omega$\nbd category $n E$ whose ``intelligent'' $n$\nbd truncation $\trunc{n}{(n E)}$ is a model of the coherent self-dual $(n-1)$\nbd equivalence (Conjecture \ref{conj:main_conjecture}).
We provide evidence for this conjecture in the form of explicit constructions of certain infinite families of cells of $\omega E$, as well as a sample of low-dimensional instances of those that are not covered by these cases, comparing them to the known low-dimensional models of coherent $n$\nbd equivalences.

While we stop short of solving the problem, we find that the construction of $E_\bullet$ is in itself interesting and non-trivial, and that the unexpected point of contact revealed between operads arising in combinatorics and higher categories via chain complexes is noteworthy, and might invite further exploration of their connections.
The form of our definition of a chain complex from an operad---with a differential that reduces the \emph{arity} of operations, instead of the size of trees, and satisfies a Leibniz rule with respect to a particular decomposition, instead of summing over all decompositions---is unusual, and we could not find analogous prior examples.
Similarly, for the methodology of attempting to lift an augmented chain complex which is \emph{not} a Steiner complex to a polygraph, seeing it as a ``linearised'' shadow of a higher-categorical structure, we could not find antecedents in the literature.
Thus, we believe that the mathematical and methodological questions that this work raises are at least as valuable as the partial answers that it offers. 

\subsection*{Acknowledgements}

The initial idea for this work was developed during my visit to Viktoriya Ozornova and F\'elix Loubaton at MPIM Bonn in February 2024, supported by the Estonian Research Council grant PSG764.
It was then set aside for a long time as faster progress was made in the direction of \cite{hadzihasanovic2025model}, and resurrected after discussions with Ozornova and Thibaut Benjamin at the \emph{Formalizing Higher Categories} event at Institut Mittag-Leffler in June 2026, which I attended with support from the ARIA Safeguarded AI programme.
I am grateful to Cl\'emence Chanavat for feedback on an earlier draft.

\section{The operad of polarised noncrossing partitions} \label{sec:polarised}

\noindent
We fix our notation and recall some facts about noncrossing partitions and the gap-insertion operad from \cite{ebrahimi2020operads}.
Throughout, given $n \in \NN$, a \emph{partition of degree $n$} will be a partition $\set{\pi_1, \ldots, \pi_k}$ of the set $\set{1, \ldots, n}$.

\begin{dfn}[Noncrossing partition]
Let $n \in \NN$.
A partition $\set{\pi_1, \ldots, \pi_k}$ of degree $n$ is \emph{noncrossing} if, for all $i, j \in \set{1, \ldots, k}$, all $a, a' \in \pi_i$, and all $b, b' \in \pi_j$, if $a < b < a' < b'$, then $i = j$.
\end{dfn}

\noindent
Let $\NCP(n)$ denote the set of noncrossing partitions of degree $n$.
Then $\size{\NCP(n)} = C_n$, the $n$-th Catalan number.

\begin{rmk}
All partitions of degree $n \leq 3$ are noncrossing.
The first non-example is the partition $\set{\set{1, 3},\set{2, 4}}$ of degree 4.
\end{rmk}

\noindent
We will adopt the common depiction of noncrossing partitions in terms of \emph{block diagrams}, which is best described by way of example:
\begin{equation*}
	\set{ \set{1, 2, 5}, \set{3, 4}, \set{6, 8}, \set{7} }
	\quad \mapsto \quad
	\pncp{ \nccup{1}{5}\ncstick{2}\nccup[0.9]{3}{4}\nccup{6}{8}\ncstick[0.9]{7}
	}\;.
\end{equation*}
The noncrossing property corresponds to the fact that the blocks do not cross each other; notice that the non-example $\set{\set{1, 3}, \set{2, 4}}$ is depicted as $\pncp{\nccup{1}{3}\nccup[0.9]{2}{4}}$.

\begin{dfn}[Operad] 
A (non-symmetric, single-coloured, set) \emph{operad} is a graded set $\coprod_{n \in \NN} \oper{P}(n)$ equipped with a unit operation $\unit \in \oper{P}(1)$, as well as partial composition operations
\begin{equation*}
	- \pcirc_i - \colon \oper{P}(n) \times \oper{P}(m) \to \oper{P}(n + m - 1), \quad i \in \set{1, \ldots, n}
\end{equation*}
for each $n, m \in \NN$, satisfying the axioms
\begin{align*}
	& x = \unit \pcirc_1 x = x \pcirc_i \unit && \text{(unitality)}, \\
	& x \pcirc_i (y \pcirc_j z) = (x \pcirc_i y) \pcirc_{i+j-1} z && \text{(sequential associativity)}, \\
	& (x \pcirc_{i+k} z) \pcirc_i y = (x \pcirc_i y) \pcirc_{i+m-1+k} z && \text{(parallel associativity)}
\end{align*}
for all $x \in \oper{P}(n)$, $y \in \oper{P}(m)$, $z \in \oper{P}(\ell)$, $i \in \set{1, \ldots, n}$, $j \in \set{1, \ldots, m}$, and $k \in \set{1, \ldots, n-i}$.
\end{dfn}

\noindent
Given an operad $\oper{P}$, the elements of $\oper{P}(n)$ are usually called \emph{$n$\nbd ary operations}.
In the gap-insertion operad, a noncrossing partition $p$ of degree $n$ determines an $(n+1)$\nbd ary operation as follows: as a block diagram, $p$ has $(n-1)$ \emph{inner gaps} between vertical lines, as well as two \emph{outer gaps} before the leftmost and after the rightmost line.
Indexing these gaps by $i \in \set{1, \ldots, n+1}$ from the leftmost to the rightmost, we let $p \pcirc_i q$ be the noncrossing partition obtained from the insertion of $q$'s block diagram into the $i$\nbd th gap of $p$'s block diagram:
\begin{align*}
	\pncp{ \nccup{1}{3}\ncstick{2} } \pcirc_1 \pncp{ \nccup{1}{2} } 
		& = \pncp{ \nccup{1}{2} \nccup{3}{5} \ncstick{2} }, 
	&\pncp{ \nccup{1}{3}\ncstick{2} } \pcirc_2 \pncp{ \nccup{1}{2} } 
		& = \pncp{ \nccup{1}{5} \nccup[0.9]{2}{3} \ncstick{4} }, \\
	\pncp{ \nccup{1}{3}\ncstick{2} } \pcirc_3 \pncp{ \nccup{1}{2} } 
		& = \pncp{ \nccup{1}{5} \nccup[0.9]{3}{4} \ncstick{2} },
	&\pncp{ \nccup{1}{3}\ncstick{2} } \pcirc_4 \pncp{ \nccup{1}{2} } 
		& = \pncp{ \nccup{1}{3} \ncstick{2} \nccup{4}{5} }.
\end{align*}
More formally, for each $m, i \in \NN$, let $\Nshift{i}{m}\colon \NN \to \NN$ be defined by $n \mapsto n$ if $n < i$, and $n \mapsto n + m$ if $n \geq i$.
 
\begin{dfn}[Gap-insertion operad]
	The \emph{gap-insertion operad} $\NCPop$ has sets of operations $\NCPop(0) \eqdef \varnothing$ and $\NCPop(n) \eqdef \NCP(n-1)$ for $n > 0$, with the empty partition $\set{}$ of degree 0 as unit operation, and partial composition defined by
\begin{equation*}
	\set{\pi_1, \ldots, \pi_k} \pcirc_i \set{\rho_1, \ldots, \rho_\ell} \eqdef \set{\Nshift{i}{m}(\pi_1), \ldots, \Nshift{i}{m}(\pi_k), \Nshift{1}{i-1}(\rho_1), \ldots, \Nshift{1}{i-1}(\rho_\ell)}
\end{equation*}
for all noncrossing partitions $\set{\pi_1, \ldots, \pi_k}$ of degree $n$, $\set{\rho_1, \ldots, \rho_\ell}$ of degree $m$, and all $i \in \set{1, \ldots, n+1}$.
\end{dfn}

\noindent
For each $n > 0$, let $\coarse_n$ be the coarsest partition of degree $n$, $\set{ \set{1, \ldots, n} }$.
By \cite[Proposition 3.1.4]{ebrahimi2020operads}, $\set{\coarse_n \mid n > 0}$ is a minimal set of generators for $\NCPop$.
We refine this to a normal form result, which we will use later on.

\begin{dfn}[Last block]
	Let $p = \set{\pi_1, \ldots, \pi_k}$ be a noncrossing partition of degree $n > 0$, and, reindexing if necessary, let the blocks of $p$ be ordered so that
	\[
		\min \pi_i < \min \pi_j \text{ if and only if } i < j.
	\]	
	Then the \emph{last block} of $p$ is $\pi_k$.
\end{dfn}

\begin{dfn}[Level of a noncrossing partition]
	Let $p$ be a noncrossing partition of degree $n$.
	The \emph{level} of $p$ is the natural number
	\begin{equation*}
		\levNCP(p) \eqdef \begin{cases}
			0 
			& \text{if $n = 0$}, \\
			\min \pi_k
			& \text{if $n > 0$ and $\pi_k$ is the last block of $p$.}
		\end{cases}
	\end{equation*}
\end{dfn}

\begin{lem} \label{lem:final_block_factorisation}
	Let $p = \set{\pi_1, \ldots, \pi_k}$ be a noncrossing partition of degree $n > 0$ and let $i \eqdef \levNCP(p)$.
	Then there exists a unique pair of a noncrossing partition $p' = \set{\pi'_1, \ldots, \pi'_{k-1}}$ and $m > 0$ such that $p = p' \pcirc_i \coarse_m$.
	Moreover, $\levNCP(p') < \levNCP(p)$.
\end{lem}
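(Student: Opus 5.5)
The plan is to show that the last block $\pi_k$ is an interval, that removing it and closing up the gap gives $p'$, and that $p'\pcirc_i \coarse_m$ gives back $p$.

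\emph{Step 1: $\pi_k$ is an interval.} Blocks are ordered by their minima, so $\pi_k$ has the largest minimum, namely $i = \levNCP(p)$. I claim that $\pi_k = \set{i, i+1, \ldots, i+m-1}$ for some $m > 0$. Suppose not. Then there are $a < c < a'$ with $a, a' \in \pi_k$ and $c \in \pi_j$ for some $j \neq k$. Every element of $\pi_j$ is at least $\min \pi_j$, and $\min \pi_j < i$ by maximality of $i$. So $b = \min \pi_j$ satisfies $b < i \leq a < c < a'$, that is, $b < a < c < a'$ with $b, c \in \pi_j$ and $a, a' \in \pi_k$. This contradicts noncrossingness. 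Hence $\pi_k$ is an interval of length $m \eqdef \size{\pi_k}$, starting at $i$.

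\emph{Step 2: existence.} Define $\pi'_j$ for $j < k$ by collapsing that interval. Elements below $i$ are kept, and elements $n' \geq i + m$ are replaced by $n' - m$. This gives a partition $p'$ of degree $n - m$. It is noncrossing, because the relabelling is order-preserving on the union of the remaining blocks, so any crossing in $p'$ would pull back to a crossing in $p$.

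We check $i \in \set{1, \ldots, n-m+1}$. If $i > n - m + 1$, then $i + m - 1 > n$, which is impossible. Comparing with the formula for the gap-insertion operad:
\begin{itemize}
\item $\Nshift{i}{m}$ inverts the collapse on each $\pi'_j$;
\item $\Nshift{1}{i-1}(\set{1, \ldots, m}) = \pi_k$.
\end{itemize}
Hence $p' \pcirc_i \coarse_m = p$.

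\emph{Step 3: uniqueness.} Suppose $p = q \pcirc_i \coarse_{m'}$. The blocks of $q \pcirc_i \coarse_{m'}$ are the $\Nshift{i}{m'}$-images of blocks of $q$, together with the single interval $\set{i, \ldots, i + m' - 1}$ coming from $\coarse_{m'}$. The shift $\Nshift{i}{m'}$ preserves the property of containing an element less than $i$ and adds $m'$ otherwise. So the block of $p$ with minimum $i$ must be the inserted interval, unless some block $\Nshift{i}{m'}(\rho)$ also has minimum $i$. That cannot happen, since every shifted element that is at least $i$ is in fact at least $i + m'$. Therefore $\set{i, \ldots, i + m' - 1} = \pi_k$, which forces $m' = m$. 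Since $\Nshift{i}{m}$ is injective, $q$ is determined by the remaining blocks, so $q = p'$. The statement that $p'$ has $k - 1$ blocks, indexed as in the claim, follows from the construction.

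\emph{Step 4: the level decreases.} If $k = 1$, then $p'$ has degree $n - m = i - 1$. Since $\pi_1$ contains $1$, we have $i = 1$, so $p'$ is empty and $\levNCP(p') = 0 < 1$. Otherwise, the last block of $p'$ is the image of $\pi_{k-1}$. Its minimum is $\min \pi_{k-1} < i$, and this minimum is unchanged by the collapse. So $\levNCP(p') = \min \pi_{k-1} < i$.

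I expect Step 1 to be the only real obstacle, namely making sure the crossing witness is chosen correctly by taking the minimum of the intervening block. The remaining steps are routine bookkeeping with the shift maps.
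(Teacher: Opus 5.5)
Your proof is correct and follows essentially the same route as the paper. You show the last block $\pi_k$ is the interval $[i, i+m-1]$ by using an element of an earlier block lying below $i$ as a crossing witness, collapse that interval to get $p'$, and obtain uniqueness and the level drop because the inserted block is the one containing $i$ while the new last block keeps the (unchanged) minimum of a non-final block of $p$. Your Step 1 phrases the crossing argument slightly more directly than the paper's minimal-$j$ argument, using any gap $a<c<a'$ in $\pi_k$ together with the minimum of the intervening block. You also supply details the paper leaves implicit, such as why $p'$ is noncrossing and why $i \leq n-m+1$.
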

\begin{proof}
Suppose $\pi_k$ is the last block of $p$, and let $m \eqdef \size{\pi_k}$.
We claim that $\pi_k$ is the entire interval $[i, i+m-1]$.
By definition of $\levNCP(p)$, we have $i \in \pi_k \subseteq [i, n]$.
If $n = i+m-1$, then we are done.
Else, let $j > i$ be minimal such that $j \not\in \pi_k$, that is, $j \in \pi_\ell$ for some $\ell \neq k$; we claim that $j = i+m$.
Indeed, suppose by way of contradiction that $j < i+m$.
Then there exists $j' \in \pi_\ell$ with $j' < i < j$ because $\pi_k$ is the last block of $p$, and there exists $i' \in \pi_k$ with $i' > j$ because $\pi_k \not\subseteq [i, j-1]$.
Then $j' < i < j < i'$ implies $\ell = k$ by the noncrossing property, a contradiction.

It follows that $\bigcup_{j < k} \pi_j = [1, i-1] \cup [i+m, n]$, and the function $\Nshift{i}{m}$ restricted to $\set{1, \ldots, n-m}$ determines a bijection with this set, whose inverse we denote by $\fun{t}$.
Then $p' \eqdef \set{ \fun{t}(\pi_1), \ldots, \fun{t}(\pi_{k-1}) }$ is a noncrossing partition of degree $n-m$ and strictly lower level, and by construction $p' \pcirc_i \coarse_m = p$. 
Moreover, the minimum of the final block of $p'$ corresponds to the minimum of a non-final block of $p$, so $\levNCP(p') < \levNCP(p)$.
This proves existence.
For uniqueness, observe that, in any factorisation $p = p' \pcirc_i \coarse_m$, the condition $i = \levNCP(p)$ forces the single block of $\coarse_m$ to become the last block of $p$, and as in the existence proof, this fact completely determines the factorisation.
\end{proof}

\begin{prop} \label{prop:ncp_unique_factorisation}
	Let $p = \set{\pi_1, \ldots, \pi_k}$ be a noncrossing partition.
	Then there exists a unique factorisation of the form
	\begin{equation*}
		p = (\ldots(\set{} \pcirc_{i_1} \coarse_{m_1}) \pcirc_{i_2} \coarse_{m_2} \ldots) \pcirc_{i_k} \coarse_{m_k}
	\end{equation*}
	with $i_1 < \ldots < i_k$.
	Moreover, $\levNCP(p) = \max \set{0, i_1, \ldots, i_k}$.
\end{prop}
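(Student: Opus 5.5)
We argue by induction on the number $k$ of blocks of $p$, using Lemma \ref{lem:final_block_factorisation} at each step. If $k = 0$, then $p = \set{}$ is the empty partition of degree $0$. The empty factorisation exists, and it is unique because any factorisation with $k' \geq 1$ factors produces a partition with $k'$ blocks: each insertion of $\coarse_m$ adds exactly one block. In this case $\levNCP(p) = 0 = \max\set{0}$, as required.

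For $k > 0$, put $i \eqdef \levNCP(p)$. Lemma \ref{lem:final_block_factorisation} gives $p = p' \pcirc_i \coarse_m$, where $p'$ has $k-1$ blocks and $\levNCP(p') < i$. By the inductive hypothesis, $p'$ has a factorisation with indices $i_1 < \ldots < i_{k-1}$ and $\levNCP(p') = \max\set{0, i_1, \ldots, i_{k-1}}$. Hence every $i_j$ is less than $i$, and appending $\pcirc_{i} \coarse_{m}$ with $i_k \eqdef i$ gives a factorisation of $p$ with strictly increasing indices. The formula for the level then holds because $\max\set{0, i_1, \ldots, i_k} = i_k = \levNCP(p)$.

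For uniqueness, take any factorisation of $p$ with $i_1 < \ldots < i_k$, and set $q \eqdef (\ldots(\set{} \pcirc_{i_1} \coarse_{m_1}) \ldots) \pcirc_{i_{k-1}} \coarse_{m_{k-1}}$, so that $p = q \pcirc_{i_k} \coarse_{m_k}$. The main step is to show that $i_k = \levNCP(p)$; the uniqueness clause of Lemma \ref{lem:final_block_factorisation} then forces $q = p'$ and $m_k = m$, and the inductive hypothesis applied to $p'$ settles the remaining factors.

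To show $i_k = \levNCP(p)$, we use the inductive hypothesis on $q$, which is already a strictly increasing factorisation: it gives $\levNCP(q) = i_{k-1} < i_k$ when $k \geq 2$, and $\levNCP(q) = 0$ when $k = 1$. In the composite $q \pcirc_{i_k} \coarse_{m_k}$, the inserted block is the interval $[i_k, i_k + m_k - 1]$. The blocks of $q$ are sent along $\Nshift{i_k}{m_k}$, which fixes every element smaller than $i_k$. In particular, the last block of $q$ has minimum $\levNCP(q) < i_k$, and this minimum stays put. So some block other than the inserted one has minimum less than $i_k$.

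It remains to see that no block of $q$ can acquire a minimum greater than $i_k$ and thereby overtake the new block. The last block of $q$ already has the largest minimum among the blocks of $q$, and $\Nshift{i_k}{m_k}$ preserves the order of minima. Therefore every block of $q$ has image with minimum less than $i_k$, and the inserted block, with minimum $i_k$, is the last block of $p$. This gives $\levNCP(p) = i_k$. The case $k = 1$ is handled the same way, since $q$ is then empty, or immediately because $p = \coarse_{m_1}$ with $i_1 = 1$.
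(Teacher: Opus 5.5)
Your proof is correct and follows essentially the same route as the paper. Existence comes by recursion on Lemma \ref{lem:final_block_factorisation} with the number of blocks decreasing, and uniqueness comes from showing that strictly increasing indices force the last inserted block to be the last block of $p$, so that $i_k = \levNCP(p)$ and the lemma's uniqueness clause applies; you just spell out the level computation that the paper compresses into one sentence.
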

\begin{proof}
	Existence follows by well-founded recursion from Lemma \ref{lem:final_block_factorisation}, using either the number of blocks or $\levNCP(-)$ as a strictly decreasing index. 
	Uniqueness also follows after observing that the condition that the $i_j$ be strictly ascending forces each subfactor ending in $\pcirc_{i_j} \coarse_{m_j}$ to have level $i_j$.
\end{proof}

\noindent 
Let $\FF$ denote the field with two elements $\set{0, 1}$, that we call \emph{bits}, and let $\oplus$ denote its addition.
\begin{dfn}[Polarised noncrossing partition]
	Let $n \in \mathbb{N}$.
	A \emph{polarised noncrossing partition} is a pair $(p, b_1 \cdots b_{n+1})$ of a noncrossing partition $p$ of degree $n$ and a string of $n+1$ bits $b_i \in \set{0, 1}$.
\end{dfn}

\noindent
The idea is that, for each $i \in \set{1, \ldots, n+1}$, the $i$\nbd th gap of $p$ is endowed with the ``polarity'' $b_i$.
We extend the block diagram representation to polarised noncrossing partitions as follows: for all $i \in \set{1, \ldots, n}$, the vertical line to the right of the $i$\nbd th gap extends horizontally to its top left if and only if $b_i = 1$; if $b_{n+1} = 1$, we leave a floating horizontal dash in the rightmost gap, extending a ``virtual'' vertical line to its right.
For example,
\begin{align*}
	\left( \set{ \set{1, 2, 5}, \set{3, 4} }, 010110 \right) 
	\quad & \mapsto \quad
	\pncp{ \nccup{1}{5} \nccup[0.9]{3}{4} \ncstick{2} \ncpol{2} \ncpol{4} \ncpol{5} }, \\
	\left( \set{ \set{1, 2, 5}, \set{3, 4} }, 101001 \right) 
	\quad & \mapsto \quad
	\pncp{ \nccup{1}{5} \nccup[0.9]{3}{4} \ncstick{2} \ncpol{1} \ncpol{3} \ncpolfin{5} }.
\end{align*}
We let $\PNCP(n)$ denote the set of polarised noncrossing partitions of degree $n$; clearly, $\size{\PNCP(n)} = 2^{n+1}C_n$.

We will induce an operadic structure on polarised noncrossing partitions from an independent operadic structure on the polarities, as follows.

\begin{dfn}[Bit operad]
	The \emph{bit operad} $\Bop$ has sets of operations $\Bop(0) = \varnothing$ and $\Bop(n) \eqdef (\FF)^n$ for $n > 0$, whose elements we write as strings $b_1 \cdots b_n$, with $0 \in \Bop(1)$ as unit operation, and partial composition defined by
	\begin{equation*}
		(b_1 \cdots b_n) \pcirc_i (c_1 \cdots c_m) \eqdef b_1 \cdots b_{i-1}(b_i \oplus c_1)\cdots(b_i \oplus c_m)b_{i+1}\cdots b_n
	\end{equation*}
for all $b_1 \cdots b_n \in \Bop(n)$, $c_1 \cdots c_m \in \Bop(m)$, and all $i \in \set{1, \ldots, n}$.
\end{dfn}

\begin{dfn}[Level of a bit string]
	Let $b_1 \cdots b_n$ be a string of $n$ bits.
	The \emph{level} of $b_1 \cdots b_n$ is the natural number
	\begin{equation*}
		\levB(b_1 \cdots b_n) \eqdef \max \left( \set{i \mid b_i = 1} \cup \set{0} \right).
	\end{equation*}
\end{dfn}


\noindent
Recall that, given two operads $\oper{P}$ and $\oper{Q}$, we can form a \emph{product operad} $\oper{P} \times \oper{Q}$ whose set of $n$\nbd ary operations is $\oper{P}(n) \times \oper{Q}(n)$, and unit and partial composition operations are induced by those of $\oper{P}$ and $\oper{Q}$ separately in each factor.

\begin{dfn}[Polarised gap-insertion operad]
	The \emph{polarised gap-insertion operad} is the product $\PNCPop$ of the gap-insertion operad $\NCPop$ with the bit operad $\Bop$.
	For each $n \in \NN$, we identify the set $\PNCPop(n+1) = \NCP(n) \times (\FF)^{n+1}$ with the set $\PNCP(n)$ of polarised noncrossing partitions of degree $n$.
\end{dfn}

\begin{rmk} 
	The sequence of injective maps $\NCP(n) \incl \PNCP(n)$ given by $p \mapsto (p, 0\cdots 0)$ determines an injective operad morphism from $\NCPop$ to $\PNCPop$, exhibiting the gap-insertion operad as a sub-operad of the polarised gap-insertion operad.
\end{rmk}

\noindent
We will not be particularly concerned with $\PNCPop$ \emph{as an operad}; the main utility of this structure is that it allows us to state a unique decomposition result generalising Proposition \ref{prop:ncp_unique_factorisation}.

\begin{dfn}[Level of a polarised noncrossing partition]
	Let $(p, b_\bullet)$ be a polarised noncrossing partition. The \emph{level} of $(p, b_\bullet)$ is the natural number
\begin{equation*}
	\lev(p, b_\bullet) \eqdef \max \set{\levNCP(p), \levB(b_\bullet)}.
\end{equation*}
\end{dfn}

\noindent
For each $n > 0$ and $b \in \set{0, 1}$, we let $\coarse_n^b \eqdef (\coarse_n, b0\cdots 0)$.
Moreover, we let $\coarse_0^b \eqdef (\set{}, b)$; in particular, $\coarse_0^0$ is the unit of $\PNCPop$.

\begin{lem} \label{lem:final_polarised_block}
	Let $(p, b_\bullet)$ be a polarised noncrossing partition and suppose that $i \eqdef \lev(p, b_\bullet) > 0$.
	Then there exists a unique triple of a polarised noncrossing partition $(p', b'_\bullet)$, $m \in \NN$, and $c \in \set{0, 1}$, such that
	\begin{enumerate}
		\item $(p, b_\bullet) = (p', b'_\bullet) \pcirc_i \coarse_m^c$,
		\item $\lev(p', b'_\bullet) < \lev(p, b_\bullet)$.
	\end{enumerate}
	Moreover, $c = b_i$.
\end{lem}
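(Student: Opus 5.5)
The plan is to compute explicitly what the composite $(p', b'_\bullet) \pcirc_i \coarse_m^c$ looks like, and then split into two cases according to whether $\levNCP(p) = i$ or $\levNCP(p) < i$. The latter case forces $\levB(b_\bullet) = i$, so $b_i = 1$.

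\emph{Step 1: the shape of the composite.} Let $(p', b'_\bullet)$ have degree $n'$, so $i \leq n'+1$. The noncrossing-partition component of the composite is $p' \pcirc_i \coarse_m$ if $m > 0$, and $p'$ itself if $m = 0$, since $\coarse_0^c$ has the unit $\set{}$ as first component. By the definition of $\Bop$, composing with the bit string $c0\cdots0$ of length $m+1$ gives
\[
b'_1 \cdots b'_{i-1}\,(b'_i \oplus c)\,b'_i \cdots b'_i\,b'_{i+1} \cdots b'_{n'+1},
\]
where $b'_i$ is repeated $m$ times.

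\emph{Step 2: consequences of $\lev(p', b'_\bullet) < i$.} Since $\levB(b'_\bullet) < i$, we have $b'_j = 0$ for all $j \geq i$. In particular $b'_i = 0$. The formula above then gives $c = b_i$, which is the final claim, together with $b_{i+1} = \cdots = b_{i+m} = 0$.

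Since also $\levNCP(p') < i$, every block of $p'$ has minimum less than $i$. So if $m > 0$, the block inserted by $\coarse_m$, which starts at position $i$, is the last block of $p' \pcirc_i \coarse_m$, and $\levNCP(p) = i$. If $m = 0$, then $p = p'$ and $\levNCP(p) < i$.

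Hence the case distinction is intrinsic to $(p, b_\bullet)$: $m > 0$ if and only if $\levNCP(p) = i$. In that case the uniqueness part of Lemma~\ref{lem:final_block_factorisation} determines $p'$ and $m$, namely $m = \size{\pi_k}$. Once $p'$, $m$ and $c$ are fixed, $b'_\bullet$ is determined by reading the formula backwards. This proves uniqueness.

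\emph{Step 3: existence.} We treat the two cases separately.
\begin{enumerate}
	\item If $\levNCP(p) = i$, take $p'$ and $m > 0$ from Lemma~\ref{lem:final_block_factorisation}, set $c \eqdef b_i$, and set $b'_\bullet \eqdef b_1 \cdots b_{i-1} 0 \cdots 0$ of length $n - m + 1$. The composite reproduces $b_\bullet$ because $\levB(b_\bullet) \leq i$ forces $b_j = 0$ for $j > i$. Moreover, $\levNCP(p') < i$ by the lemma and $\levB(b'_\bullet) \leq i - 1$.
	\item If $\levNCP(p) < i$, then $b_i = 1$. Take $p' \eqdef p$, $m \eqdef 0$, $c \eqdef 1$, and let $b'_\bullet$ be $b_\bullet$ with $b_i$ replaced by $0$. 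Both levels are then less than $i$.
\end{enumerate}

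The only delicate point is the observation in Step 2 that $\levNCP(p') < i$ makes the inserted block the last one. This is what allows uniqueness to be reduced to Lemma~\ref{lem:final_block_factorisation}. Everything else is bookkeeping with the bit-operad formula.
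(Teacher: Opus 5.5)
Your proof is correct and follows essentially the same route as the paper: a case split on whether $\levNCP(p) = i$ or $\levNCP(p) < i$, with Lemma~\ref{lem:final_block_factorisation} handling the first case and the explicit factorisation through $\coarse_0^1$ handling the second. The only difference is organisational. You derive $c = b_i$ and the criterion ``$m > 0$ if and only if $\levNCP(p) = i$'' for an arbitrary admissible factorisation before splitting into cases, whereas the paper argues uniqueness by contradiction separately within each case.
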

\begin{proof}
	In what follows, $b'_\bullet$ is a string of the appropriate length $\geq i$ depending on context, defined by $b'_j \eqdef b_j$ for all $j < i$ and $b'_i \eqdef 0$ for all $j \geq i$; by construction, $\levB(b'_\bullet) < i$.
	First, suppose that $i = \levB(b_\bullet) > \levNCP(p)$.
	Then $b_i = 1$, $(p, b_\bullet) = (p, b'_\bullet) \pcirc_i \coarse_0^1$ with $\lev(p, b'_\bullet) < i$, and the factorisation is unique, because any factorisation of the form $(p', b''_\bullet) \pcirc_i \coarse_m^c$ with $m > 0$ would imply $\levNCP(p) = i$, a contradiction.
	Next, suppose that $i = \levNCP(p) \geq \levB(b_\bullet)$.
	Then Lemma \ref{lem:final_block_factorisation} provides a factorisation $p = p' \pcirc_i \coarse_m$, and $(p, b_\bullet) = (p', b'_\bullet) \pcirc_i \coarse_m^{b_i}$ with $\lev(p', b'_\bullet) < i$, proving existence.
	Suppose that $(p'', b''_\bullet) \pcirc_i \coarse_{m'}^c$ is another factorisation satisfying the same conditions.
	If $m' > 0$, then $p = p'' \pcirc_i \coarse_{m'}$, so $p'' = p'$ and $m' = m$ by Lemma \ref{lem:final_block_factorisation}.
	Moreover, $b''_j = b_j$ for all $j \neq i$, while $b''_i = b_i \oplus c$, so if $b_i = c$, then $b''_\bullet = b'_\bullet$, and we recover the original factorisation; if $b_i \neq c$, then $\levB(b''_\bullet) = i$, contradicting $\lev(p'', b''_\bullet) < i$.
	Finally, if $m' = 0$, then $\levNCP(p'') = \levNCP(p) = i$, again contradicting $\lev(p'', b''_\bullet) < i$.
\end{proof}

\begin{prop} \label{prop:unique_factorisation}
	Let $(p, b_\bullet)$ be a polarised noncrossing partition.
	Then there exists a unique factorisation of the form
	\begin{equation*}
		(p, b_\bullet) = (\ldots(\coarse_0^0 \pcirc_{i_1} \coarse^{c_1}_{m_1}) \pcirc_{i_2} \coarse^{c_2}_{m_2} \ldots) \pcirc_{i_k} \coarse^{c_k}_{m_k}
	\end{equation*}
	with $i_1 < \ldots < i_k$, and, for all $j \in \set{1, \ldots, k}$, either $m_j > 0$ or $c_j = 1$.
	Moreover, $\lev(p, b_\bullet) = \max \set{0, i_1, \ldots, i_k}$, and $c_j = b_{i_j}$ for all $j \in \set{1, \ldots, k}$.
\end{prop}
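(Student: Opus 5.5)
Existence goes by well-founded induction on $\lev(p, b_\bullet)$, iterating Lemma \ref{lem:final_polarised_block}, exactly as Proposition \ref{prop:ncp_unique_factorisation} was derived from Lemma \ref{lem:final_block_factorisation}. If $\lev(p, b_\bullet) = 0$, then $p$ is the empty partition of degree $0$ and $b_\bullet = 0$: indeed, for $n > 0$ we have $\levNCP(p) \geq 1$, and for $n = 0$ level $0$ forces $b_1 = 0$. So $(p, b_\bullet) = \coarse_0^0$, and the factorisation is the empty one with $k = 0$.

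If $i \eqdef \lev(p, b_\bullet) > 0$, Lemma \ref{lem:final_polarised_block} gives $(p, b_\bullet) = (p', b'_\bullet) \pcirc_i \coarse_m^c$ with $\lev(p', b'_\bullet) < i$ and $c = b_i$. We check that $m > 0$ or $c = 1$. The factor $\coarse_0^0$ is the unit, so if $m = 0$ and $c = 0$ we would get $(p, b_\bullet) = (p', b'_\bullet)$, contradicting the strict drop in level. By induction, $(p', b'_\bullet)$ has a factorisation with indices $i_1 < \ldots < i_{k-1}$ and $\max\set{0, i_1, \ldots, i_{k-1}} = \lev(p', b'_\bullet) < i$. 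Setting $i_k \eqdef i$ gives the required factorisation, with $\lev(p, b_\bullet) = i_k = \max\set{0, i_1, \ldots, i_k}$.

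For uniqueness and the remaining claims, the key step is an auxiliary claim: for any factorisation of the stated form, every partial composite $x_j \eqdef (\ldots(\coarse_0^0 \pcirc_{i_1} \coarse^{c_1}_{m_1}) \ldots) \pcirc_{i_j} \coarse^{c_j}_{m_j}$ has level exactly $i_j$. Prove this by induction on $j$, assuming $\lev(x_{j-1}) \le i_{j-1} < i_j$, and split into cases.
\begin{itemize}
	\item If $m_j > 0$: the noncrossing part of $x_j$ is obtained by inserting $\coarse_{m_j}$ at gap $i_j$. All earlier blocks have minima below $i_j$ (those at least $i_j$ get shifted by $m_j$, but by the induction hypothesis $\levNCP(x_{j-1}) < i_j$, so all minima are already below $i_j$). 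Hence the inserted block is last, with minimum $i_j$. The bit string of $x_{j-1}$ vanishes from position $i_j$ on, so after composition the bits at positions $> i_j$ are still $0$. Therefore $\levB(x_j) \le i_j$ and $\lev(x_j) = i_j$.
	\item If $m_j = 0$: then $c_j = 1$, so bit $i_j$ becomes $0 \oplus 1 = 1$ and all later bits stay $0$. The noncrossing part is unchanged, with level below $i_j$. Hence $\lev(x_j) = i_j$.
\end{itemize}
In both cases the new bit at position $i_j$ is $0 \oplus c_j = c_j$.

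Given the claim, $\lev(p, b_\bullet) = i_k$, which yields the max formula (or $0$ when $k = 0$). Uniqueness then follows by induction on $k$ (equivalently on the level). Any factorisation satisfies $(p, b_\bullet) = x_{k-1} \pcirc_{i_k} \coarse^{c_k}_{m_k}$ with $i_k = \lev(p, b_\bullet)$ and $\lev(x_{k-1}) < i_k$. The uniqueness part of Lemma \ref{lem:final_polarised_block} then pins down $x_{k-1}$, $m_k$ and $c_k$, and induction handles $x_{k-1}$. The case $k = 0$ is forced because level $0$ means $(p, b_\bullet) = \coarse_0^0$, while $k > 0$ gives positive level.

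For $c_j = b_{i_j}$: bit $i_j$ equals $c_j$ in $x_j$, and later insertions happen at gaps $i_{j'} > i_j$, so they leave positions $\le i_j$ untouched. The main obstacle is this bookkeeping of indices under later insertions, i.e. checking that gap positions $< i_{j'}$ are not shifted, which holds by the definition of $\Nshift{i}{m}$.
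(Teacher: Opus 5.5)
Your proof is correct and follows the paper's argument. Existence is by well-founded recursion on $\lev(-)$ using Lemma \ref{lem:final_polarised_block}. Uniqueness comes from the observation that each partial composite ending in $\pcirc_{i_j} \coarse_{m_j}^{c_j}$ has level exactly $i_j$, so the uniqueness clause of that lemma peels off the last factor. You just spell out bookkeeping the paper leaves implicit: the case split $m_j > 0$ versus $m_j = 0$, the unit argument ruling out a final factor $\coarse_0^0$, and the fact that bits at positions below later insertion gaps are unchanged.
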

\begin{proof}
	Existence follows by well-founded recursion from Lemma \ref{lem:final_polarised_block}, with $\ell(-)$ as the strictly decreasing index.
	Uniqueness also follows from the observation that each subfactor ending in $\pcirc_{i_j} \coarse_{m_j}^{c_j}$ has level $i_j$.
\end{proof}

\begin{dfn}[Rightmost decomposition]
	The \emph{rightmost decomposition} of a polarised noncrossing partition $(p, b_\bullet)$ is its factorisation granted by Proposition \ref{prop:unique_factorisation}.
\end{dfn}

\noindent
We may thus uniquely encode a polarised noncrossing partition as a sequence $(i_j, m_j, b_j)_{j=1}^k$ of triples of numbers $i_j \in \NN_{>0}$, $m_j \in \NN$, $b_j \in \set{0, 1}$, subject to the constraints 
\begin{enumerate}
	\item $i_1 < \ldots < i_k$,
	\item for all $j \in \set{1, \ldots, k}$, $i_j \leq 1 + \sum_{\ell=1}^{j-1} m_\ell$,
	\item for all $j \in \set{1, \ldots, k}$, either $m_j > 0$ or $b_j = 1$.
\end{enumerate}
Notice that these force $i_1 = 1$.
We call $k$ the \emph{length} of the rightmost decomposition; observe that 
\begin{enumerate}
	\item the $\coarse_m^b$ are precisely the polarised noncrossing partitions whose rightmost decomposition has length at most 1,
	\item every other polarised noncrossing partition has a rightmost decomposition of the form $(p, b_\bullet) \pcirc_i \coarse_m^c$ where $(p, b_\bullet)$ has a rightmost decomposition of strictly lower length,
\end{enumerate}
and this will allow us to prove facts by induction on the length of the rightmost decomposition.

\begin{rmk}
	Given the representation $(i_j, m_j, b_j)_{j=1}^k$ of a polarised noncrossing partition, one can recover its degree as $\sum_{j=1}^k m_j$ and its level as $\max \set{0, i_1, \ldots, i_k}$.
\end{rmk}

\begin{exm}
The rightmost decomposition of $\pncp{ \nccup{1}{5} \nccup[0.9]{3}{4} \ncstick{2} \ncpol{2} \ncpol{4} \ncpol{5} }$ is
\begin{align*}
	((((\coarse_0^0 \pcirc_1 \coarse_3^0) & \pcirc_2 \coarse_0^1) \pcirc_3 \coarse_2^0) \pcirc_4 \coarse_0^1) \pcirc_5 \coarse_0^1 \\
					      & = ((1, 3, 0), (2, 0, 1), (3, 2, 0), (4, 0, 1), (5, 0, 1)),
\end{align*}
while the rightmost decomposition of $\pncp{ \nccup{1}{5} \nccup[0.9]{3}{4} \ncstick{2} \ncpol{1} \ncpol{3} \ncpolfin{5} }$ is
\[
	((\coarse_0^0 \pcirc_1 \coarse_3^1) \pcirc_3 \coarse_2^1) \pcirc_6 \coarse_0^1 = 
	((1, 3, 1), (3, 2, 1), (6, 0, 1)).
\]
\end{exm}

\section{The augmented chain complex} \label{sec:chain}

\noindent
The goal of this section is to define an augmented chain complex related to the operad of polarised noncrossing partitions, and study its properties.

\begin{dfn}[Augmented chain complex]
An \emph{augmented chain complex} $C_\bullet$ is a chain complex of abelian groups in non-negative degree
\[\begin{tikzcd}
\ldots & {C_n} & {C_{n-1}} & \ldots & {C_{1}} & {C_{0}}
\arrow["\der", from=1-1, to=1-2]
\arrow["\der", from=1-2, to=1-3]
\arrow["\der", from=1-3, to=1-4]
\arrow["\der", from=1-4, to=1-5]
\arrow["\der", from=1-5, to=1-6]
\end{tikzcd}\]
together with a homomorphism $\aug\colon C_{0} \to \ZZ$ satisfying $\aug\der = 0$.
\end{dfn}

\noindent
We let $\chaug$ denote the category of augmented chain complexes and homomorphisms (that is, degree-wise homomorphisms of abelian groups which commute with the differentials and augmentation).

Given a set $B$, we let $\ZZ B$ denote the free abelian group on $B$.

\begin{dfn}[The augmented chain complex $E_\bullet$]
We define an augmented chain complex $E_\bullet$ as follows.
We let 
\[
	E_0 \eqdef \ZZ \set{s, t}, \quad \quad E_{n+1} \eqdef \ZZ \left(\PNCP(n)\right)
\]
for each $n \in \NN$.
Since the abelian groups are free in each degree, it suffices to define $\der$ and $\aug$ on generators.
We let $\aug s = \aug t  = 1$.
Next, for each $n \in \NN$ and $(p, b_\bullet) \in \PNCP(n)$, we define $\der(p, b_\bullet)$ by induction on the length of the rightmost decomposition of $(p, b_\bullet)$. 
First of all, if $(p, b_\bullet) = \coarse_m^c$ for some $m \in \NN$ and $c \in \set{0, 1}$, we let, inductively on $m$,
\begin{align*}
	\der \coarse_0^c & \eqdef (-)^c(t - s), \\
	\der \coarse_{m+1}^c & \eqdef \coarse_m^{c \oplus 1} + (-)^m \coarse_m^c.
\end{align*}
Next, suppose that the rightmost decomposition of $(p, b_\bullet)$ is of the form $(p', b'_\bullet) \pcirc_i \coarse_m^c$ with $p'$ of degree $k > 0$.
We distinguish two cases.
\begin{itemize}
	\item \emph{Case $i = k+1$}. Then
	\begin{align*}
		\der\left( (p', b'_\bullet) \pcirc_i \coarse_0^1 \right) & 
			\eqdef - \der (p', b'_\bullet), \\
		\der\left( (p', b'_\bullet) \pcirc_i \coarse_{m+1}^c \right) & 
			\eqdef (p', b'_\bullet) \pcirc_i \der \coarse_{m+1}^c.
	\end{align*}
	\item  \emph{Case $i < k+1$}. Then
	\begin{align*}
		\der\left( (p', b'_\bullet) \pcirc_i \coarse_0^1 \right) & 
			\eqdef \der (p', b'_\bullet) \pcirc_i \coarse_0^1, \\
		\der\left( (p', b'_\bullet) \pcirc_i \coarse_{m+1}^c \right) & 
			\eqdef \der(p', b'_\bullet) \pcirc_i \coarse_{m+1}^c + (-)^{k+1-i}(p', b'_\bullet) \pcirc_i \der \coarse_{m+1}^c.
	\end{align*}
\end{itemize}
Wherever necessary, $- \pcirc_i -$ is extended bilinearly in the evident way.
\end{dfn}

\begin{exm}
	Let us compute $\der \pncp{ \nccup{1}{5} \nccup[0.9]{3}{4} \ncstick{2} \ncpol{1} \ncpol{3} \ncpolfin{5} }$ according to the definition.
	First of all, we have $\pncp{ \nccup{1}{5} \nccup[0.9]{3}{4} \ncstick{2} \ncpol{1} \ncpol{3} \ncpolfin{5} } = \pncp{ \nccup{1}{5} \nccup[0.9]{3}{4} \ncstick{2} \ncpol{1} \ncpol{3} } \pcirc_6 \coarse_0^1$, so
	\[
		\der \pncp{ \nccup{1}{5} \nccup[0.9]{3}{4} \ncstick{2} \ncpol{1} \ncpol{3} \ncpolfin{5} } = - \der \pncp{ \nccup{1}{5} \nccup[0.9]{3}{4} \ncstick{2} \ncpol{1} \ncpol{3} }.
	\]
	Next, $\pncp{\nccup{1}{5} \nccup[0.9]{3}{4} \ncstick{2} \ncpol{1} \ncpol{3} } = \pncp{ \nccup{1}{3} \ncstick{2} \ncpol{1} } \pcirc_3 \coarse_2^1$, so
	\[
		\der \pncp{\nccup{1}{5} \nccup[0.9]{3}{4} \ncstick{2} \ncpol{1} \ncpol{3} } =  \der \pncp{ \nccup{1}{3} \ncstick{2} \ncpol{1} } \pcirc_3 \coarse_2^1 + (-)^{4-3} \pncp{ \nccup{1}{3} \ncstick{2} \ncpol{1} } \pcirc_3 \der \coarse_2^1 
		= \der \coarse_3^1 \pcirc_3 \coarse_2^1 - \coarse_3^1 \pcirc_3 \der \coarse_2^1.
	\]
	Finally, we have
	\[
		\der \coarse_3^1 = \coarse_2^0 + \coarse_2^1, \quad \quad \der \coarse_2^1 = \coarse_1^0 - \coarse_1^1.
	\]
	Putting everything together, we have
\begin{align*}
	\der \pncp{ \nccup{1}{5} \nccup[0.9]{3}{4} \ncstick{2} \ncpol{1} \ncpol{3} \ncpolfin{5} } & =
	- \left( (\coarse_2^0 + \coarse_2^1) \pcirc_3 \coarse_2^1  - \coarse_3^1 \pcirc_3 (\coarse_1^0 - \coarse_1^1) \right) = \\
		& = - \coarse_2^0 \pcirc_3 \coarse_2^1 - \coarse_2^1 \pcirc_3 \coarse_2^1 + \coarse_3^1 \pcirc_3 \coarse_1^0 - \coarse_3^1 \pcirc_3 \coarse_1^1 = \\
		& = - \pncp{ \nccup{1}{2} \nccup{3}{4} \ncpol{3} } - 
		\pncp{ \nccup{1}{2} \nccup{3}{4} \ncpol{1} \ncpol{3} } +
		\pncp{ \nccup{1}{4} \ncstick{2} \ncstick[0.9]{3} \ncpol{1} } -
		\pncp{ \nccup{1}{4} \ncstick{2} \ncstick[0.9]{3} \ncpol{1} \ncpol{3} }.
\end{align*}
\end{exm}

\begin{prop} \label{prop:chain_complex_well_defined}
	The augmented chain complex $E_\bullet$ is well-defined.
\end{prop}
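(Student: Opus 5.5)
The proof has three parts: the recursion defining $\der$ is well-posed and lands in the right degree; $\aug\der = 0$; and $\der\der = 0$. All three go by induction on the length of the rightmost decomposition, using Proposition \ref{prop:unique_factorisation}.

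\textbf{Well-posedness.} By uniqueness of rightmost decompositions, each generator falls into exactly one clause of the definition.

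First I check that the clause ``$p'$ of degree $k>0$'' covers every decomposition of length $\geq 2$. If $p'$ had degree $0$, it would be $\coarse_0^0$ or $\coarse_0^1$. Its next index would then satisfy $i \leq 1 + 0 = 1 = i_1$, contradicting $i_1 < i$.

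Next, the partial composites must make sense. Every term of $\der(p',b'_\bullet)$ has degree $k-1$, hence arity $k$. So $\der(p',b'_\bullet)\pcirc_i -$ is defined exactly when $i \leq k$, which is the case $i<k+1$. The composite $(p',b'_\bullet)\pcirc_i \der\coarse_{m+1}^c$ is always defined.

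A degree count shows that $\der$ lowers degree by one. Finally, $\aug\der = 0$: the only generators in $E_1$ are $\coarse_0^0$ and $\coarse_0^1$, whose differentials $\pm(t-s)$ are killed by $\aug$.

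\textbf{Base case of $\der\der=0$.} Take the generators $\coarse_m^c$.
\begin{itemize}
\item For $m=0$: $\der\der\coarse_1^c = \der\coarse_0^{c\oplus1} + \der\coarse_0^c = (-)^{c+1}(t-s) + (-)^c(t-s) = 0$.
\item For $m \geq 1$: $\der\der\coarse_{m+1}^c = \coarse_{m-1}^c + (-)^{m-1}\coarse_{m-1}^{c\oplus1} + (-)^m\bigl(\coarse_{m-1}^{c\oplus1} + (-)^{m-1}\coarse_{m-1}^{c}\bigr) = 0$.
\end{itemize}

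\textbf{Inductive step: a Leibniz lemma.} For the inductive step I would first prove an auxiliary lemma: every generator occurring in $\der x$ has level at most $\lev(x)$. This goes by induction using the defining clauses; the base case holds because all $\coarse_m^{c}$ with $m>0$ or $c=1$ have level $1$.

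Granting this, if $q$ occurs in $\der(p',b'_\bullet)$ and $i > \lev(p',b'_\bullet)$, then $q\pcirc_i\coarse_{m}^{c}$ is again in rightmost form. So $\der$ of it can be computed by the same clauses.

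I would then extract a uniform \emph{restricted Leibniz formula}: for any $x$ of degree $k$ with $\lev(x)<i\leq k+1$,
\[
\der(x\pcirc_i \coarse_{m+1}^c) = \epsilon\,\der x\pcirc_i \coarse_{m+1}^c + (-)^{k+1-i}\,x\pcirc_i\der\coarse_{m+1}^c,
\]
where $\epsilon = 1$ if $i\leq k$ and $\epsilon = 0$ if $i=k+1$. The lemma would also record the companion rule for $x\pcirc_i\coarse_0^1$, and would allow $\der x$ to be a linear combination rather than a single generator.

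\textbf{Inductive step: expanding $\der\der$.} With this formula, I expand $\der\der(x\pcirc_i\coarse_{m+1}^c)$. The terms $\der\der x\pcirc_i\coarse_{m+1}^c$ and $x\pcirc_i\der\der\coarse_{m+1}^c$ vanish, by the induction hypothesis and the base case. The two mixed terms $\der x\pcirc_i\der\coarse_{m+1}^c$ appear with signs $(-)^{k-i}$ and $(-)^{k+1-i}$, and so cancel. The $\coarse_0^1$ cases reduce directly to $\der\der x=0$, up to sign.

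\textbf{Main obstacle.} I expect the bookkeeping at the boundary to be the hardest part, in two places.
\begin{itemize}
\item \emph{The last-gap switch.} When $x$ has degree $k$ and $i=k$, the terms of $\der x$ have degree $k-1$, so $i$ becomes their last gap. The clause for them therefore switches from the ``$i<k+1$'' form to the ``$i=k+1$'' form. I must check that the exponent $(-)^{(k-1)+1-i} = +1$ matches the last-gap clause, and that the missing $\der$-of-prefix term is compensated.
\item \emph{Degenerate terms.} The differential $\der\coarse_1^c$ produces $\coarse_0^{c\oplus1}$ and $\coarse_0^c$. One of these is $\coarse_0^0$, so $x\pcirc_i\coarse_0^0=x$ is no longer in rightmost form.
\end{itemize}
I would handle both by proving the Leibniz lemma first for all $x$ of level $<i$, with $\coarse_0^0$ interpreted as the unit. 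I would then check the $m=0$ and $i=k$ cases separately by direct computation, in the spirit of the base case above.
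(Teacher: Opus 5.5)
Your proposal is correct and follows essentially the same route as the paper's proof. Both argue by induction on the length of the rightmost decomposition, do the base computation on the $\coarse_m^c$, cancel the two mixed terms $\der x \pcirc_i \der\coarse_m^c$ in the inductive step, and check separately the cases $\coarse_1^c$ (where $\coarse_0^0$ appears as a unit) and $i = k$ (where $i$ becomes the last gap of the terms of $\der x$), which is exactly the paper's case distinction.

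Your level-non-increase lemma guarantees that each $q \pcirc_i \coarse_m^c$, for $q$ occurring in $\der(p', b'_\bullet)$, is again a rightmost decomposition. This makes explicit a step the paper uses silently here and only records afterwards, in Proposition \ref{prop:filtration_is_well_formed}.
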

\begin{proof}
	The definition of $\der$ is complete and unambiguous by uniqueness of rightmost decompositions, so it suffices to show that $\aug\der = 0$ and $\der\der = 0$.
	By freeness of the groups, it suffices to check this on generators.
	For each $c \in \set{0, 1}$, we have
	\[
		\aug\der \coarse_0^c = (-)^c(\aug t - \aug s) = (-)^c(1 - 1) = 0.
	\]
	For generators $(p, b_\bullet)$ in higher degrees, we proceed by induction on the length of the rightmost decomposition of $(p, b_\bullet)$.
	Suppose $(p, b_\bullet) = \coarse_m^c$ for some $m > 0$ and $c \in \set{0, 1}$. 
	If $m = 1$, we have
	\[
		\der\der \coarse_1^c = \der\coarse_0^{c \oplus 1} + \der\coarse_0^c = (-)^{c+1}(t-s) + (-)^c(t-s) = 0.
	\]
	If $m > 1$, we have
	\begin{align*}
		\der\der \coarse_m^c & = \der\coarse_{m-1}^{c \oplus 1} + (-)^{m-1}\der\coarse_{m-1}^c = \\
		& = \coarse_{m-2}^c + (-)^{m-2}\coarse_{m-2}^{c \oplus 1} + (-)^{m-1}\left(\coarse_{m-2}^{c \oplus 1} + (-)^{m-2}\coarse_{m-2}^c\right) = \\
		& = \left( 1 + (-)^{2m-3} \right) \coarse_{m-2}^c + \left( (-)^{m-2} + (-)^{m-1} \right) \coarse_{m-2}^{c \oplus 1} = 0.
	\end{align*}
	Otherwise, $(p, b_\bullet)$ has a rightmost decomposition of the form $(p', b'_\bullet) \pcirc_i \coarse_m^c$ with $p'$ of degree $k > 0$; we may assume inductively that $\der \der (p', b'_\bullet) = 0$.
	We proceed by a case distinction.
	If $i = k+1$, $m = 0$, $c = 1$, we have $\der\der (p, b_\bullet) = - \der\der (p', b'_\bullet) = 0$.
	If $i = k+1$ and $m > 0$, we have 
	\[
		\der\der (p, b_\bullet) = (p', b'_\bullet) \pcirc_i \der \der \coarse_{m}^c = 0
	\]
	by bilinearity and the first part of the proof.
	Next, suppose that $i = k$.
	If $m = 0$, $c = 1$, then
	\[
		\der\der (p, b_\bullet) = \der \left(\der(p', b'_\bullet) \pcirc_i \coarse_0^1\right) = - \der\der(p', b'_\bullet) = 0,
	\]
	if $m = 1$, then
	\begin{align*}
		\der\der (p, b_\bullet) & = \der \left( \der(p', b'_\bullet) \pcirc_i \coarse_1^c - (p', b'_\bullet) \pcirc_i \der \coarse_1^c \right) = \\
		& = \der (p', b'_\bullet) \pcirc_i \der \coarse_1^c - \der \left( (p', b'_\bullet) + (p', b'_\bullet) \pcirc_i \coarse_0^1 \right) = \\
		& = \der (p', b'_\bullet) + \der (p', b'_\bullet) \pcirc_i \coarse_0^1 - \der (p', b'_\bullet) - \der (p', b'_\bullet) \pcirc_i \coarse_0^1 = 0,
	\end{align*}
	and if $m > 1$, then
	\begin{align*}
		\der\der (p, b_\bullet) & = \der \left( \der(p', b'_\bullet) \pcirc_i \coarse_m^c - (p', b'_\bullet) \pcirc_i \der \coarse_m^c \right) = \\
		& = \der (p', b'_\bullet) \pcirc_i \der \coarse_m^c - \der (p', b'_\bullet) \pcirc_i \der \coarse_m^c + (p', b'_\bullet) \pcirc_i \der \der \coarse_m^c = 0. 
	\end{align*}
	Finally, suppose that $i < k$.
	If $m = 0$, $c = 1$, then
	\[
		\der\der (p, b_\bullet) = \der \der (p', b'_\bullet) \pcirc_i \coarse_0^1 = 0,
	\]
	if $m = 1$, then
	\begin{align*}
		\der \der (p, b_\bullet) & = \der \left( \der(p', b'_\bullet) \pcirc_i \coarse_1^c + (-)^{k+1-i}\left( (p', b'_\bullet) + (p', b'_\bullet) \pcirc_i \coarse_0^1 \right) \right) = \\
			& = (-)^{k-i} \der(p', b'_\bullet) \pcirc_i \der \coarse_1^c + (-)^{k+1-i} \der \left( (p', b'_\bullet) + (p', b'_\bullet) \pcirc_i \coarse_0^1 \right) = \\
			& = \left((-)^{k-i} + (-)^{k+1-i}\right)
			\left( \der(p', b'_\bullet) + \der(p', b'_\bullet) \pcirc_i \coarse_0^1 \right) = 0,
	\end{align*}
	and if $m > 1$, then $\der \der (p, b_\bullet)$ is equal to
	\[
		\der\der (p', b'_\bullet) \pcirc_i \coarse_m^c + \left((-)^{k-i} + (-)^{k+1-i}\right) \der(p', b'_\bullet) \pcirc_i \der \coarse_m^c + (p', b'_\bullet) \pcirc_i \der \der \coarse_m^c = 0.
	\]
	This concludes the case distinction and the proof.
\end{proof}

\begin{rmk}
	This chain complex seems to be unrelated to the one arising when studying the homology of the \emph{lattice} of noncrossing partitions \cite{edelman1980chain}. 
\end{rmk}

\noindent
Next, we show that $E_\bullet$ is ``self-dual'' in the sense that it has a non-trivial involutive automorphism swapping $s$ and $t$ in degree 0.

\begin{prop} \label{prop:chain_selfdual}
	Let $\tau\colon E_\bullet \to E_\bullet$ be defined on generators by
	\begin{align*}
		& s \mapsto t,\, t \mapsto s && \text{in degree $0$,} \\
		& (p, b_1 \cdots b_{n+1}) \mapsto (p, (b_1 \oplus 1)b_2 \cdots b_{n+1}) && \text{in degree $> 0$}.
	\end{align*}
	Then $\tau$ is an involutive automorphism of augmented chain complexes.
\end{prop}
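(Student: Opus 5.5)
Involutivity and bijectivity are immediate: $\tau$ swaps $s,t$ and flips only the first bit, so $\tau\tau = \mathrm{id}$ on generators and $\tau$ is an automorphism of graded abelian groups. Compatibility with the augmentation is also clear, since $\aug\tau s = \aug t = 1 = \aug s$, and symmetrically for $t$. The content of the statement is therefore that $\der\tau = \tau\der$, and by freeness it suffices to check this on generators.

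I will argue by induction on the length of the rightmost decomposition, proving in parallel that $\tau$ commutes with partial composition in the first slot of the outer operation. Concretely, $\tau\left((p',b'_\bullet)\pcirc_i \coarse_m^c\right) = \tau(p',b'_\bullet)\pcirc_i \coarse_m^c$ when $i>1$. When $i=1$, the bit operad shows that flipping the first bit of the composite amounts to flipping $b'_1$ in the outer factor. This in turn is the same as flipping $c$ together with all the bits that the inner factor contributes in slot $1$.

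The key structural observation is that $\tau$ preserves rightmost decompositions except in the first factor. If $(p,b_\bullet)$ has rightmost decomposition $(\ldots(\coarse_0^0 \pcirc_1 \coarse_{m_1}^{c_1})\ldots)\pcirc_{i_k}\coarse_{m_k}^{c_k}$, then since $i_1=1$ and $c_1=b_1$, the element $\tau(p,b_\bullet)$ has the same decomposition with $c_1$ replaced by $c_1\oplus 1$. This follows from Proposition \ref{prop:unique_factorisation}, because the bit $b_1$ only enters through the first factor (the later $i_j>1$). One caveat: if $m_1=0$ and $c_1=1$, flipping gives $\coarse_0^0$, which is the unit. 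In that case the decomposition shortens, or lengthens in the reverse direction, and it has to be handled separately. That case only occurs when the degree-$n$ partition has $i_2=1$ impossible, so actually $m_1=0$ forces $k=1$, i.e.\ the element is $\coarse_0^1$ itself.

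With this in hand, the base case is the elements $\coarse_m^c$. For these I verify $\der\coarse_m^{c\oplus1} = \tau\der\coarse_m^c$ by induction on $m$. For $m=0$ the identity reads $(-)^{c+1}(t-s) = (-)^c(s-t)$, which holds. For $m+1$ the left side is $\coarse_m^{c} + (-)^m\coarse_m^{c\oplus1}$, and $\tau$ applied to $\coarse_m^{c\oplus1}+(-)^m\coarse_m^c$ gives exactly this.

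For the inductive step, write $(p,b_\bullet)=(p',b'_\bullet)\pcirc_i\coarse_m^c$ with $p'$ of degree $k>0$. Here $i\geq 2$ when the decomposition has length at least 2, because $i_1=1<i$. Hence $\tau(p,b_\bullet)=\tau(p',b'_\bullet)\pcirc_i\coarse_m^c$ is again a rightmost decomposition falling in the same case of the definition of $\der$, with the same $i$ and $k$. Each defining clause is then compatible with $\tau$. In the clauses involving $\der(p',b'_\bullet)\pcirc_i(-)$, I use the inductive hypothesis $\der\tau(p',b'_\bullet)=\tau\der(p',b'_\bullet)$, together with the fact that $\tau$ commutes with $-\pcirc_i x$ for $i\geq2$ after bilinear extension. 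This extension is legitimate because $\der(p',b'_\bullet)$ lies in positive degree, where $\tau$ acts by flipping the first bit and composition in slot $i\geq 2$ does not touch that bit.

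The main obstacle I anticipate is a degenerate case. If $k=1$ and $\der(p',b'_\bullet)$ lands in degree $0$, the expressions $\der(p',b'_\bullet)\pcirc_i -$ would involve $s,t$, but this cannot occur since $p'$ has degree $k>0$ and so lies in $E_{\geq 2}$. The remaining point is to check carefully that the first factor of the decomposition really is untouched by later compositions. I will verify this directly from the definition of the bit operad, using that the first bit of a composite in slot $i\geq2$ is inherited from the outer operation.
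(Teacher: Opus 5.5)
Your proof is correct and follows the paper's argument. The shared steps are induction on the length of the rightmost decomposition, a direct check on the $\coarse_m^c$, and the identity $\tau(x \pcirc_i y) = \tau(x) \pcirc_i y$ for $i > 1$. The paper derives that identity from $\tau(x) = x \pcirc_1 \coarse_0^1$ and parallel associativity, whereas you read it off the bit operad; your explicit check that $\tau(p', b'_\bullet) \pcirc_i \coarse_m^c$ is again a rightmost decomposition fills in a step the paper leaves implicit.

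One aside is wrong but unused. For $i = 1$, flipping $b'_1$ in the outer factor flips every bit contributed by the inner factor, not only the first bit of the composite. In the argument itself you correctly use $\tau \coarse_m^c = \coarse_m^{c \oplus 1}$ instead.
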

\begin{proof}
	First of all, $\tau$ is evidently a degreewise involutive bijection on generators and it commutes with the augmentation, so it suffices to show that $\der \tau = \tau \der$ in every degree.
	Let $(p, b_\bullet)$ be a polarised noncrossing partition; we proceed by induction on the length of its rightmost decomposition.
	Observe that
	\[
		\tau(p, b_\bullet) = (p, b_\bullet) \pcirc_1 \coarse_0^1,
	\]
	but this is not a rightmost decomposition unless $(p, b_\bullet)$ is $\coarse_0^0$.
	If $(p, b_\bullet) = \coarse_m^c$ for some $m \in \NN$ and $c \in \set{0, 1}$, then $\tau(p, b_\bullet) = \coarse_m^{c\oplus 1}$, so
	\begin{align*}
		\der \tau \coarse_0^c & = \der\coarse_0^{c\oplus 1} = (-)^{c+1}(t - s) = (-)^c(s - t) = \tau \der\coarse_0^c, \\
		\der \tau \coarse_{m+1}^c & = \der\coarse_{m+1}^{c\oplus 1} = \coarse_{m}^{c} + (-)^m \coarse_m^{c \oplus 1} = \tau \der\coarse_{m+1}^c.
	\end{align*}
	Otherwise, if $(p, b_\bullet) = (p', b'_\bullet) \pcirc_i \coarse_m^c$ with $i > 1$ and $p'$ of degree $k > 0$, then 
	\begin{align*}
		\tau(p, b_\bullet) 
		& = ((p', b'_\bullet) \pcirc_i \coarse_m^c) \pcirc_1 \coarse_0^1 = \\
		& = ((p', b'_\bullet) \pcirc_1 \coarse_0^1) \pcirc_i \coarse_m^c = \tau(p', b'_\bullet) \pcirc_i \coarse_m^c
	\end{align*}
	by parallel associativity.
	It follows that, if $i < k +1$, then if $m = 0$, $c = 1$
	\[
		\der \tau \left( (p', b'_\bullet) \pcirc_i \coarse_0^1 \right) = \der \tau (p', b'_\bullet) \pcirc_i \coarse_0^1 = \tau \der (p', b'_\bullet) \pcirc_i \coarse_0^1 = \tau \der \left( (p', b'_\bullet) \pcirc_i \coarse_0^1 \right), 
	\]
	and if $m > 0$
	\begin{align*}
		\der \tau \left( (p', b'_\bullet) \pcirc_i \coarse_m^c \right) & = \der\tau(p', b'_\bullet) \pcirc_i \coarse_m^c + (-)^{k+1-i} \tau(p', b'_\bullet) \pcirc_i \der \coarse_m^c = \\
									       & = \tau\der(p', b'_\bullet) \pcirc_i \coarse_m^c + (-)^{k+1-i} \tau (p', b'_\bullet) \pcirc_i \der\coarse_m^c = \\
									       & = \tau \der \left( (p', b'_\bullet) \pcirc_i \coarse_m^c \right).
	\end{align*}
	The case $i = k+1$ is analogous and even easier.
\end{proof}

\begin{dfn}[Level filtration]
	Let $k \in \NN$.
	We let $\filtr{k}{E}_0 \eqdef E_0$ and
	\begin{align*}
		\filtr{k}{\PNCP}(n) & \eqdef \set{ (p, b_\bullet) \in \PNCP(n) \mid \lev(p, b_\bullet) \leq k }, \\
		\filtr{k}{E}_{n+1} & \eqdef \ZZ \filtr{k}{\PNCP}(n)
	\end{align*}
	for each $n \in \NN$.
	This determines a chain of degreewise injective homomorphisms of abelian groups
\[\begin{tikzcd}
	{\filtr{0}{E}_n} & {\filtr{1}{E}_n} & \ldots & {\filtr{k}{E}_n} & \ldots
	\arrow[hook, from=1-1, to=1-2]
	\arrow[hook, from=1-2, to=1-3]
	\arrow[hook, from=1-3, to=1-4]
	\arrow[hook, from=1-4, to=1-5]
\end{tikzcd}\]
	which, for each $n \in \NN$, stabilises at $k = n$, that is, $\filtr{k}{E}_n = E_n$ for all $k \geq n$.
	We call this the \emph{level filtration} of $E_\bullet$.
\end{dfn}

\begin{prop} \label{prop:filtration_is_well_formed}
	Let $k \in \NN$. 
	Then $\filtr{k}{E}_\bullet$ determines a subcomplex of $E_\bullet$.
	Consequently, the level filtration is a filtration of augmented chain complexes
\[\begin{tikzcd}
	{\filtr{0}{E}_\bullet} & {\filtr{1}{E}_\bullet} & \ldots & {\filtr{k}{E}_\bullet} & \ldots
	\arrow[hook, from=1-1, to=1-2]
	\arrow[hook, from=1-2, to=1-3]
	\arrow[hook, from=1-3, to=1-4]
	\arrow[hook, from=1-4, to=1-5]
\end{tikzcd}\]
	whose colimit is $E_\bullet$.
\end{prop}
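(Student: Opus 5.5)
Since $\filtr{k}{E}_0 = E_0$ and the augmentation is defined only on $E_0$, the whole statement reduces to one claim: for every polarised noncrossing partition $(p, b_\bullet)$, every generator appearing in $\der(p, b_\bullet)$ has level at most $\lev(p, b_\bullet)$. In degree $1$ there is nothing to check, since the differential lands in $E_0 = \filtr{k}{E}_0$. The colimit statement is then formal. The inclusions are degreewise injective and compatible with $\der$ and $\aug$, and $\filtr{k}{E}_n = E_n$ for $k \geq n$, so in each degree the filtration is eventually constant with value $E_n$. Colimits in $\chaug$ are computed degreewise, so the colimit is $E_\bullet$.

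I prove the level claim by induction on the length of the rightmost decomposition, following the cases in the definition of $\der$. For the generators $\coarse_{m+1}^c$ the level is $1$ if $m+1>0$ or $c=1$, and $0$ only for $\coarse_0^0$. The summands $\coarse_m^{c\oplus 1}$ and $\coarse_m^c$ have level at most $1$, so this case is fine.

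For the inductive step, write $(p, b_\bullet) = (p', b'_\bullet) \pcirc_i \coarse_m^c$ as a rightmost decomposition. By Proposition \ref{prop:unique_factorisation}, $\lev(p, b_\bullet) = i$ and $\lev(p', b'_\bullet) < i$. The main tool is a lemma on how levels behave under composition: if $(q, d_\bullet)$ has level $< i$ (or $\le i$), then $(q, d_\bullet) \pcirc_i \coarse_{m'}^{c'}$ has level $\le i$. For the bit part, positions before $i$ are unchanged. Position $i$ becomes $d_i \oplus c'$, and the later positions $i+1, \ldots, i+m'$ become $d_i \oplus 0 = d_i$, followed by the shifted $d_{i+1}, \ldots$. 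This is where $\coarse_{m'}^{c'} = (\coarse_{m'}, c'0\cdots0)$ matters. If $d_j = 0$ for $j \geq i$, all bits beyond position $i$ are zero.

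The partition part needs more care. Inserting the single block $[i, i+m'-1]$ into gap $i$ makes it the block with the largest minimum. This holds provided every block of $q$ has minimum $< i$, which follows from $\levNCP(q) < i$, or from $\le i$ with the edge case handled; blocks of $q$ whose minimum is $\ge i$ would shift past it. So the level of the composite is exactly $i$ when $m' > 0$ or $c' = 1$, and at most $i$ in general.

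Now I apply the lemma term by term. The terms $\der(p', b'_\bullet) \pcirc_i \coarse_m^c$ involve generators of level $\le \lev(p', b'_\bullet) < i$ by induction, hence have level $\le i$. Here I must also check that $i$ is still a valid gap index after the degree drops by one. In the case $i < k+1$ the term $\der(p', b'_\bullet)$ has degree $k-1$, so gaps $1, \ldots, k$ exist, and $i \leq k$. The terms $(p', b'_\bullet) \pcirc_i \coarse_{m}^{c''}$ coming from $\der \coarse_{m+1}^c$ have level $\le i$ by the lemma directly. In the case $i = k+1$ with $\coarse_0^1$, the term is $-\der(p', b'_\bullet)$, which has level $< i$ by induction.

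I expect the main obstacle to be the lemma's partition-level part when $\der(p', b'_\bullet)$ contains generators of level exactly equal to some $j < i$ that is not the old gap structure. I would handle this purely by the inequality "level $<i$ implies all blocks have minimum $<i$ and all bits at positions $\ge i$ are zero", which makes the composite's level computation immediate.
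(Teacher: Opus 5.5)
Your proof is correct and is essentially the paper's argument made explicit. Both proceed by induction on the length of the rightmost decomposition: each term of the Leibniz-rule differential either keeps the final gap $i$ over a factor of level $<i$ (your direct computation shows that $g \pcirc_i \coarse_{m'}^{c'}$ then has level $\le i$), or drops it, leaving something of level $<i$. The only blemish is the parenthetical ``(or $\le i$)'' in your lemma, which is false (e.g.\ $\coarse_1^0 \pcirc_1 \coarse_1^0 = (\set{\set{1},\set{2}}, 000)$ has level $2$); you never use it, so simply delete it.
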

\begin{proof}
	For the generators $s, t$ of degree $0$, there is nothing to prove as they are included in each level.
	By Proposition \ref{prop:unique_factorisation}, the level of a generator in higher degree is determined by the final factor in its rightmost decomposition.
	The Leibniz-rule definition of $\der$ then guarantees that $\der$ never raises the level: each term of $\der(p, b_\bullet)$ either keeps the final-factor gap, or drops it, leading to a factor with a strictly lower level.
\end{proof}

\begin{prop} 
	Let $k > 0$.
	Then $\tau\colon E_\bullet \to E_\bullet$ restricts to an involutive automorphism on $\filtr{k}{E}_\bullet$.
\end{prop}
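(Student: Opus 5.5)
Since Proposition \ref{prop:chain_selfdual} shows that $\tau$ is an involutive automorphism of $E_\bullet$, and Proposition \ref{prop:filtration_is_well_formed} shows that $\filtr{k}{E}_\bullet$ is a subcomplex, it suffices to check that $\tau$ maps generators of $\filtr{k}{E}_\bullet$ to generators of $\filtr{k}{E}_\bullet$. In other words, we need $\lev(\tau(p, b_\bullet)) \leq k$ whenever $\lev(p, b_\bullet) \leq k$. Once this holds, the restriction of $\tau$ commutes with $\der$ and $\aug$ because $\tau$ does on all of $E_\bullet$. It is also an involution, since $\tau\tau = \idd{}$ on $E_\bullet$, and hence a bijection on the generators of each $\filtr{k}{E}_n$. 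In degree $0$ nothing needs checking, because $\filtr{k}{E}_0 = E_0$.

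Now fix $(p, b_1 \cdots b_{n+1})$. The map $\tau$ leaves $p$ unchanged, so $\levNCP$ is unchanged. It changes only the first bit $b_1$. If any $b_j = 1$ with $j \geq 2$, then $\levB$ is the largest such $j$ both before and after applying $\tau$. Otherwise $\levB(b_\bullet) \in \set{0, 1}$, and $\tau$ swaps the values $0$ and $1$. Consequently
\[
	\lev(\tau(p, b_\bullet)) \leq \max\set{\lev(p, b_\bullet), 1}.
\]
This is exactly where the hypothesis $k > 0$ enters. If $\lev(p, b_\bullet) \leq k$ with $k \geq 1$, then $\lev(\tau(p, b_\bullet)) \leq \max\set{k, 1} = k$.

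I expect no real obstacle. The only subtle point is the role of $k > 0$, and it is worth remarking on why it is needed. For $k = 0$ the claim fails: $\coarse_0^0$ has level $0$, while $\tau\coarse_0^0 = \coarse_0^1$ has level $1$. So $\tau$ does not preserve $\filtr{0}{E}_\bullet$.
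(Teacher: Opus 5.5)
Your proof is correct and takes essentially the same route as the paper. Both reduce the claim to showing that $\tau$ preserves the set of generators of level $\leq k$. Your bound $\lev(\tau(p, b_\bullet)) \leq \max\set{\lev(p, b_\bullet), 1}$ is an explicit form of the paper's observation that $\coarse_0^0$ is the only generator whose level $\tau$ raises, and that it raises it only to $1 \leq k$.
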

\begin{proof}
	The only generator whose level is strictly raised by $\tau$ is $\coarse_0^0$.
	Since both $\coarse_0^0$ and its dual $\coarse_0^1$ appear in each level $k > 0$, we conclude.
\end{proof}

\noindent
Recall that the \emph{Catalan triangle numbers} are defined by
\[
	T(n, k) \eqdef \binom{n+k}{k} - \binom{n+k}{k-1}, \quad 0 \leq k \leq n
\]
and that they satisfy the vertical summation identities
\begin{equation} \label{eq:vertical_summation}
	T(n, k) = \sum_{i=k}^n T(i, k-1), \quad 1 \leq k \leq n.
\end{equation}
In particular, $T(n, n) = C_n$, the $n$-th Catalan number, while $T(n, 0) = 1$.
We also let $\delta_n$ be $1$ if $n = 0$ and $0$ if $n > 0$.

\begin{prop} \label{prop:enumeration_of_Enk}
	Let $n > 0$, $k \in \NN$, and let 
	\[
		E(n, k) \eqdef \size{ \filtr{k}{\PNCP}(n-1) }.
	\]
	Then
	\[
		E(n, k) = \begin{cases}
			\delta_{n-1} & \text{if $k = 0$,} \\
			2^{\min(n, k)} T(n-1, \min(n, k)-1) & \text{if $k > 0$.}
		\end{cases}
	\]
\end{prop}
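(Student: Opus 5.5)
The plan is to separate the bit string from the partition. A generator of $\filtr{k}{\PNCP}(n-1)$ is a pair $(p, b_1\cdots b_n)$ with $p \in \NCP(n-1)$, and $\lev(p,b_\bullet) \le k$ holds exactly when $\levB(b_\bullet) \le k$ and $\levNCP(p) \le k$. The first condition says only that $b_j = 0$ for $j > k$. So the bits $b_1,\ldots,b_{\min(n,k)}$ are free and the rest are forced, which contributes the factor $2^{\min(n,k)}$.

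It remains to count $N(m,j) \eqdef \size{\set{p \in \NCP(m) \mid \levNCP(p) \le j}}$ with $m = n-1$. The case $k = 0$ is handled directly: level $0$ forces $p = \set{}$ and all bits zero, which gives $\delta_{n-1}$. For $k > 0$ we need $N(n-1,k) = T(n-1, \min(n,k)-1)$. For $m \ge 1$ we always have $\levNCP(p) \le m$, so $N(m,j) = C_m$ whenever $j \ge m$. This is consistent with the claim because $T(m,m-1) = T(m,m) = C_m$, which is the case $k = n = m+1$ of \eqref{eq:vertical_summation}. It therefore suffices to prove $N(m,j) = T(m,j-1)$ for all $m \ge 0$ and $1 \le j \le m+1$. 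The edge case $m = 0$, $n = 1$ is immediate: $N(0,j) = 1 = T(0,0)$.

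The key step is a recurrence obtained from Lemma \ref{lem:final_block_factorisation}. A noncrossing partition $p$ of degree $m$ with $\levNCP(p) = j > 0$ factors uniquely as $p' \pcirc_j \coarse_r$ with $r \ge 1$ and $\levNCP(p') < j$. Here $p'$ has degree $m - r$, and the gap index must satisfy $j \le m-r+1$. Conversely, any such $p'$ and $r$ produce a partition of level exactly $j$, since the inserted block becomes the last block. I must check this converse carefully. It holds because the blocks of $p'$ all have minima $< j$ (as $\levNCP(p') < j$), these minima are unshifted by $\Nshift{j}{r}$, and the inserted block has minimum exactly $j$. Hence the number of partitions of level exactly $j$ is $\sum_{d=j-1}^{m-1} N(d, j-1)$, where $d = m-r$. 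Adding $N(m,j-1)$ gives
\[
	N(m,j) = \sum_{d=j-1}^{m} N(d, j-1), \quad 1 \le j \le m+1.
\]
This mirrors the vertical summation identity \eqref{eq:vertical_summation}.

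The proof then concludes by induction on $j$. The base case is $N(d,0) = \delta_d$, so $N(m,1) = \sum_{d=0}^m \delta_d = 1 = T(m,0)$. In the inductive step, $N(d,j-1) = T(d,j-2)$ for all $d \ge j-2$ by the hypothesis (its range $j-1 \le d+1$ covers every $d$ in the sum). Then \eqref{eq:vertical_summation} gives $N(m,j) = \sum_{d=j-1}^m T(d,j-2) = T(m,j-1)$ for $2 \le j \le m$. The boundary value $j = m+1$ needs separate treatment: there the sum equals $T(m,m-1) = C_m$, as it must. I expect the main obstacle to be this bookkeeping of boundary indices, namely the degenerate terms $d = j-1$ where the bound $j-1 \ge d$ applies and $T(d,d-1) = T(d,d)$ must be invoked, together with the convention $T(0,\cdot)$. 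The combinatorial content itself is just the last-block factorisation.
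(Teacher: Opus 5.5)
Your proof is correct, and it reaches the paper's recurrence by a different decomposition. The paper works with polarised partitions throughout. Using Proposition \ref{prop:unique_factorisation}, an element of level at most $k$ is either of strictly lower level, or of the form $(p, b_\bullet) \pcirc_k \coarse_0^1$, or of the form $(p, b_\bullet) \pcirc_k \coarse_r^c$ with $r > 0$. This gives $E(n,k) = 2\sum_{i=k}^n E(i,k-1)$ directly, with the factor $2$ and the $\coarse_0^1$ case together accounting for the bit in position $k$. You instead note that, since $\lev$ is a maximum of two independent conditions, the set splits as a product. One factor is the bit strings vanishing beyond position $k$, and the other is the partitions of level at most $k$. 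This explains the power $2^{\min(n,k)}$ at once. What remains is a purely unpolarised fact: noncrossing partitions of degree $m$ and level at most $j$ are counted by $T(m,j-1)$, which you prove from Lemma \ref{lem:final_block_factorisation} alone. Multiplying your recurrence by $2^k$ recovers the paper's exactly, so both arguments rest on last-block factorisation, induction on level, and vertical summation. Yours is more transparent about where the bits enter and isolates a clean statement about $\NCP$. The paper's stays inside the rightmost-decomposition framework it also uses for the differential and the acyclicity proof. Three small remarks. First, $T(m,m-1) = T(m,m)$ is the case $k = n = m$ of \eqref{eq:vertical_summation}, not $m+1$. Second, the boundary $j = m+1$ needs no separate treatment, because \eqref{eq:vertical_summation} with $n = m$ and $k = j-1 = m$ covers it. Third, the terms with $d = j-1$ are handled directly by your inductive hypothesis, so that step poses no real obstacle.
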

\begin{proof}
	The only polarised noncrossing partition of level 0 is $\coarse_0^0$, which has degree 0, so $E(1, 0) = 1$ and $E(n, 0) = 0$ for all $n > 1$.
	Suppose $k > 0$.
	If $k \leq n$, by Proposition \ref{prop:unique_factorisation}, the polarised noncrossing partitions of level $k$ are those whose rightmost decomposition is of the form $(p, b_\bullet) \pcirc_k \coarse_m^c$, where $(p, b_\bullet)$ has strictly lower level.
	Thus, a polarised noncrossing partition of degree $n-1$ and level \emph{at most} $k$ is either
	\begin{itemize}
		\item a polarised noncrossing partition of strictly lower level, of which there are $E(n, k-1)$, or
		\item $(p, b_\bullet) \pcirc_k \coarse_0^1$, where $(p, b_\bullet)$ has strictly lower level and degree $n-1$, of which there are also $E(n, k-1)$,
		\item $(p, b_\bullet) \pcirc_k \coarse_{n-i}^c$, where $(p, b_\bullet)$ has strictly lower level and degree $i-1$ with $k \leq i < n$, of which there are $2 \sum_{i=k}^{n-1} E(i, k-1)$.
	\end{itemize}
	Overall, we have $E(n, k) = 2\sum_{i=k}^n E(i, k-1)$.
	Now, inductively on $0 < k \leq n$, we have, for $k = 1$,
	\[
		E(n, 1) = 2\sum_{i=1}^n E(i, 0) = 2E(1, 0) = 2 = 2^1 T(n-1, 0),
	\]
	and for $k > 1$
	\begin{align*}
		E(n, k) & = 2\sum_{i=k}^n E(i, k-1) = 2\sum_{i=k}^n 2^{k-1} T(i-1, k-2) = \\
			& = 2^k \sum_{j={k-1}}^{n-1} T(j, k-2) = 2^k T(n-1, k-1)
	\end{align*}
	using the vertical summation identity (\ref{eq:vertical_summation}).
	Finally, for $k > n$, every polarised noncrossing partition of degree $n-1$ has level $< k$, so
	\[
		E(n, k) = E(n, n) = 2^n T(n-1, n-1) = 2^{\min(n,k)} T(n-1, \min(n,k) - 1). \qedhere
	\]
\end{proof}

\noindent
The rest of this section will be devoted to the proof that $E_\bullet$, as well as each of the levels $\filtr{k}{E}_\bullet$, is acyclic.
First, we recall some basic notions of simplicial homotopy theory \cite{goerss1999simplicial}.

Given a simplicial set $K$ and $n \in \NN$, we let $K[n]$ denote its set of $n$-simplices, and for each $i \in \set{0, \ldots, n}$, we let $d_i\colon K[n] \to K[n-1]$ denote the $i$\nbd th face map and $s_i\colon K[n] \to K[n+1]$ the $i$\nbd th degeneracy map.
We let $\Nd K[n]$ denote the set of \emph{nondegenerate} $n$\nbd simplices in $K$, that is, the complement of $\bigcup_{i=0}^{n-1} s_i(K[n-1])$ in $K[n]$.
One can associate to $K$ a chain complex $\NSimp K_\bullet$, the \emph{normalised simplicial chain complex} of $K$, which in degree $n$ is defined by
\[
	\NSimp K_n \eqdef \ZZ \left(\Nd K[n]\right),
\]
with differential maps defined, for each $n > 0$ and $x \in \Nd K[n]$, by
\[
	\der\colon x \mapsto \sum_{i \mid d_i x \in \Nd K[n-1]} (-)^i d_i x.
\]
This can be given an augmentation defined by $\aug\colon x \mapsto 1$ for each $x \in K[0]$, which determines a functor
\[
	\NSimp\colon \sSet \to \chaug.
\]
The homology of $\NSimp K_\bullet$ is the homology of $K$; in particular, if $K$ is contractible, then $\NSimp K_\bullet$ is acyclic.

\begin{lem} \label{lem:E0_acyclic}
	$\filtr{0}{E}_\bullet$ is acyclic.
\end{lem}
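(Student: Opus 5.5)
We compute $\filtr{0}{E}_\bullet$ explicitly and check acyclicity by hand. By the proof of Proposition \ref{prop:enumeration_of_Enk}, the only polarised noncrossing partition of level $0$ is $\coarse_0^0$. So $\filtr{0}{E}_0 = \ZZ\set{s, t}$, $\filtr{0}{E}_1 = \ZZ\set{\coarse_0^0}$, and $\filtr{0}{E}_n = 0$ for $n \geq 2$. Acyclic here means that the augmented complex has trivial homology, with the augmentation in degree $-1$. We must therefore check three things: the augmentation $\aug$ is surjective, $\ker \aug = \operatorname{im} \der$ in degree $0$, and $\der$ is injective on $\filtr{0}{E}_1$.

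The checks are direct. Since $\aug s = 1$, the augmentation $\aug$ is surjective onto $\ZZ$. The kernel of $\aug$ consists of the elements $a s + b t$ with $a + b = 0$, so it is $\ZZ(t - s)$. By definition $\der \coarse_0^0 = t - s$, so the image of $\der$ equals this kernel. Finally, $\der\colon \ZZ\set{\coarse_0^0} \to \ZZ\set{s,t}$ sends the generator to $t - s \neq 0$, and $\ZZ\set{s,t}$ is torsion-free, so $\der$ is injective and $H_1 = 0$. All higher homology groups vanish because the groups are zero.

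An alternative, matching the simplicial setup just recalled, is to identify $\filtr{0}{E}_\bullet$ with $\NSimp \Delta[1]_\bullet$, the normalised chains on the contractible $1$-simplex. Here $s, t$ correspond to the vertices $0, 1$ and $\coarse_0^0$ to the nondegenerate edge, with $\der = d_0 - d_1 = t - s$. Acyclicity then follows from contractibility of $\Delta[1]$.

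Nothing here is a real obstacle; the only point needing care is the claim that $\coarse_0^0$ is the unique level-$0$ generator. This holds because any nonempty $p$ has $\levNCP(p) \geq 1$, and any bit string containing a $1$ has positive level.
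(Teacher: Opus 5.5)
Your proposal is correct and takes essentially the paper's approach: your alternative identification of $\filtr{0}{E}_\bullet$ with the normalised chains on the $1$\nbd simplex is exactly the paper's proof. Your direct check of exactness, including the augmentation, is a more self-contained supplement that avoids appealing to simplicial homology.
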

\begin{proof}
	The only generators of $\filtr{0}{E}_\bullet$ are $\set{s, t}$ in degree $0$ and $\coarse_0^0$ in degree $1$, with $\der \coarse_0^0 = t - s$.
	This is easily seen to be isomorphic to the normalised simplicial chain complex of the 1-simplex, which is acyclic since it computes the homology of the interval.
\end{proof}

\noindent
Given a set $S$, there is a simplicial set $\cosk_0 S$ whose $n$-simplices are strings $x_0 \cdots x_n$ of elements of $S$, and face and degeneracy maps are defined by
\begin{align*}
	d_i(x_0 \cdots x_n) & \eqdef x_0\cdots x_{i-1}x_{i+1}\cdots x_n, \quad \quad \\
	s_i(x_0 \cdots x_n) & \eqdef x_0\cdots x_{i-1}x_i x_i x_{i+1}\cdots x_n
\end{align*}
for each $i \in \set{0, \ldots, n}$.
When $S$ has at least one element $x$, then $\cosk_0 S$ is contractible; this is most easily established by observing that the family of maps $x_0\cdots x_n \mapsto x x_0 \cdots x_n$ determines an \emph{extra degeneracy} on $K$ \cite[Section 5.5]{goerss1999simplicial}.
Moreover, the nondegenerate simplices of $\cosk_0 S$ are readily established to be precisely the strings with no consecutive repeated entries.

\begin{lem} \label{lem:characterisation_of_E1}
	$\filtr{1}{E}_\bullet$ is isomorphic to $\NSimp (\cosk_0 \set{s, t})_\bullet$.
	In particular, it is acyclic.
\end{lem}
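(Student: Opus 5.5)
The plan is to write down an explicit isomorphism on generators and check that it commutes with the differentials. I will first identify the generators of $\filtr{1}{E}_\bullet$, then those of $\NSimp(\cosk_0\set{s,t})_\bullet$, and then match them degree by degree.

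\emph{Generators of $\filtr{1}{E}_\bullet$.} By Proposition \ref{prop:unique_factorisation}, the level of $(p, b_\bullet)$ is the largest index $i_j$ in its rightmost decomposition, and $i_1 = 1$ whenever the decomposition is nonempty. Since the $i_j$ are strictly increasing, level $\leq 1$ forces length at most $1$. Hence $(p, b_\bullet)$ is either $\coarse_0^0$ (the empty decomposition) or $\coarse_m^c$ with $m > 0$ or $c = 1$. So $\filtr{1}{E}_{m+1}$ is free on exactly the two generators $\coarse_m^0, \coarse_m^1$ for every $m \in \NN$. This agrees with Proposition \ref{prop:enumeration_of_Enk}, since $E(n, 1) = 2$. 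The differential on these generators is given by the base clauses of the definition alone:
\[
\der\coarse_0^c = (-)^c(t-s), \qquad \der\coarse_{m+1}^c = \coarse_m^{c\oplus 1} + (-)^m\coarse_m^c .
\]

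\emph{Generators of $\NSimp(\cosk_0\set{s,t})_\bullet$.} The nondegenerate $n$\nbd simplices are the strings of length $n+1$ with no consecutive repeats. In the alphabet $\set{s,t}$ these are the two alternating strings, one beginning with $s$ and one beginning with $t$. For such a string $x$ with $n \geq 1$, deleting an interior letter $x_i$ with $0 < i < n$ creates the repeat $x_{i-1} = x_{i+1}$. So only $d_0$ and $d_n$ survive, and
\[
\der x = d_0 x + (-)^n d_n x .
\]
Here $d_0 x$ is the alternating string of length $n$ beginning with the other letter, while $d_n x$ begins with the same letter as $x$.

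\emph{The isomorphism.} I define $\phi$ by $s \mapsto s$ and $t \mapsto t$ in degree $0$, and by sending $\coarse_m^c$ to the alternating string of length $m+2$ beginning with $s$ if $c = 0$ and with $t$ if $c = 1$. This is a bijection on generators in each degree and preserves the augmentation, so it remains to check $\der\phi = \phi\der$ on generators.
\begin{itemize}
\item For $m = 0$: $\der(st) = t - s$ and $\der(ts) = s - t$, matching $(-)^c(t-s)$.
\item For $\coarse_{m+1}^c$: its image is a simplex of dimension $n = m+2$, so $(-)^n = (-)^m$. The face $d_0$ corresponds to $\coarse_m^{c\oplus 1}$ and the face $d_n$ to $\coarse_m^c$, which reproduces $\der\coarse_{m+1}^c$ exactly.
\end{itemize}

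Acyclicity then follows because $\cosk_0\set{s,t}$ is contractible via the extra degeneracy recalled above, and its normalised chain complex computes its homology. The only step needing care is this sign and face bookkeeping, specifically matching $d_0$ with the flipped polarity $c \oplus 1$. Once $\phi$ is set up as above I expect no real obstacle.
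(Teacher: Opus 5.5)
Your proposal is correct and follows the paper's proof: the same identification of the nondegenerate simplices of $\cosk_0\set{s,t}$ as alternating strings with only $d_0$ and $d_n$ surviving, the same assignment $\coarse_m^0 \mapsto stst\cdots$, $\coarse_m^1 \mapsto tsts\cdots$ (length $m+2$), and the same check against the base clauses of $\der$. Your only addition is an explicit argument, via rightmost decompositions, that level $\leq 1$ forces length $\leq 1$; the paper leaves this step implicit.
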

\begin{proof}
	For each $n \in \NN$, the only nondegenerate $n$\nbd simplices in $\cosk_0 \set{s, t}$ are the alternating strings of $s$ and $t$ of length $n+1$; that is,
	\[
		\underbrace{stst\cdots}_{n+1}, \quad \quad \underbrace{tsts\cdots}_{n+1}.
	\]
	which end with the same character as their first when $n$ is even, and with the other character when $n$ is odd.
	Moreover, for $n > 0$, their only nondegenerate faces are $d_0$ and $d_n$, since removing any inner entry produces a string with a repetition.
	Thus, the differential maps in $\NSimp (\cosk_0 \set{s, t})_\bullet$ are defined by
	\begin{align*}
		\der(\underbrace{stst\cdots}_{n+1}) & = \underbrace{tsts\cdots}_n + (-)^n\underbrace{stst\cdots}_n, \\
		\der(\underbrace{tsts\cdots}_{n+1}) & = \underbrace{stst\cdots}_n + (-)^n\underbrace{tsts\cdots }_n.
	\end{align*}
	On the other hand, $\filtr{1}{E}_\bullet$ is generated by $\set{s, t}$ in degree $0$, and by $\set{\coarse_n^0, \coarse_n^1}$ in degree $n + 1$ for each $n \in \NN$.
	The groups in degree $0$ are literally identical, and for each $n \in \NN$, by inspection, the assignment
	\[
		\coarse_n^0 \mapsto \underbrace{stst\cdots}_{n+2}, \quad \quad \coarse_n^1 \mapsto \underbrace{tsts\cdots}_{n+2}
	\]
	is bijective on generators and compatible with the differentials in degree $n+1$, hence determines an isomorphism of augmented chain complexes.
\end{proof}

\begin{thm} \label{thm:acyclicity}
	For each $k \in \NN$, $\filtr{k}{E}_\bullet$ is acyclic.
	Consequently, $E_\bullet$ is acyclic.
\end{thm}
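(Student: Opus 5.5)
The plan is induction on $k$, with Lemma \ref{lem:E0_acyclic} and Lemma \ref{lem:characterisation_of_E1} as the base cases $k = 0, 1$. For $k \geq 2$, assume $\filtr{k-1}{E}_\bullet$ is acyclic. The inclusion is a subcomplex by Proposition \ref{prop:filtration_is_well_formed}, so there is a short exact sequence of chain complexes
\[
0 \to \filtr{k-1}{E}_\bullet \to \filtr{k}{E}_\bullet \to Q_\bullet \to 0.
\]
Here $Q_\bullet$ is zero in degree $0$, and I regard the augmentation as part of the complex in degree $-1$. By the long exact sequence in homology, it suffices to show that the quotient $Q_\bullet$ has vanishing homology in every degree.

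By Proposition \ref{prop:unique_factorisation}, $Q_\bullet$ is free on the generators of level exactly $k$. These are the $x \pcirc_k \coarse_m^c$, where $x$ has level $< k$, $(m,c) \neq (0,0)$, and $x$ has degree $\geq k-1$, so that the gap $k$ exists. Now read off the differential modulo $\filtr{k-1}{E}_\bullet$ from the definition.
\begin{itemize}
\item The term $x \pcirc_k \der\coarse_m^c$ expands into $x \pcirc_k \coarse_{m-1}^{c'}$. Every summand stays at level $k$ except those with $\coarse_0^0$, which drop to level $<k$ and vanish in $Q_\bullet$. Likewise $\der\coarse_0^1 = -(t-s)$ contributes nothing.
\item The term $\der x \pcirc_k \coarse_m^c$ keeps the final gap and so stays at level $k$. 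It is present only when $k$ is an inner gap of $x$. When $k = \deg x + 1$, the definition simply omits it, or gives $-\der x$ for $\coarse_0^1$, which has lower level and vanishes in $Q_\bullet$.
\end{itemize}

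Next, filter $Q_\bullet$ by the degree of the left factor $x$. The $\der x$ terms strictly lower this degree, and the $\der\coarse$ terms preserve it. In each fixed total degree the filtration is finite, so it suffices that the associated graded is acyclic. The associated graded is $\bigoplus_x x \pcirc_k B_\bullet$, with differential $\pm \der$ on the second factor; the sign $(-)^{\deg x+1-k}$ does not affect exactness. Here $B_\bullet$ is spanned by the $\coarse_m^c$ with $(m,c)\neq(0,0)$, with the differential of $\filtr{1}{E}_\bullet$ taken modulo $\coarse_0^0$ and $E_0$.

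By Lemma \ref{lem:characterisation_of_E1}, $B_\bullet$ is the relative normalised chain complex $\NSimp(\cosk_0\set{s,t}) / \NSimp(\Delta^1)$. Here $\Delta^1$ is the nondegenerate edge $st \leftrightarrow \coarse_0^0$ together with its vertices. Both spaces are contractible, so the relative homology vanishes and $B_\bullet$ is acyclic, which gives $Q_\bullet$ acyclic. Finally, $E_\bullet$ is the filtered colimit of the $\filtr{k}{E}_\bullet$, indeed degreewise stabilising, and homology commutes with such colimits, so $E_\bullet$ is acyclic.

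The main obstacle is the bookkeeping in the computation of the differential on $Q_\bullet$. One must check that the boundary case $k = \deg x + 1$, where the $\der x$ term is omitted, is compatible with the degree filtration. One must also check that the terms of $\der x \pcirc_k \coarse_m^c$ are genuinely rightmost decompositions of level $k$, so that the identification of the associated graded with copies of $B_\bullet$ is correct. If that identification fails, the fallback is to filter by the level-$<k$ factor $x$ itself, ordered by degree, which gives the same argument in a less canonical form.
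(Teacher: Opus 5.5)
Your proposal is correct and takes essentially the same route as the paper. Both arguments reduce to the acyclicity of the level quotients $\filtr{k}{E}_\bullet/\filtr{k-1}{E}_\bullet$, filter these by the degree of the left factor $x$ in the rightmost decomposition $x \pcirc_k \coarse_m^c$, and identify each summand of the associated graded with a shifted copy of $\filtr{1}{E}_\bullet/\filtr{0}{E}_\bullet$, which is acyclic as a quotient of acyclic complexes. The only cosmetic difference is the sign $(-)^{\deg x+1-k}$: you remove it by noting that rescaling a differential by a unit leaves homology unchanged, whereas the paper writes down the explicit isomorphism $\coarse_m^c \mapsto \epsilon^m\, x \pcirc_k \coarse_m^c$.
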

\begin{proof}
	By Lemma \ref{lem:E0_acyclic}, $\filtr{0}{E}_\bullet$ is acyclic, thus by a standard argument---using exact sequences and the commutation of homology with sequential colimits---it suffices to show that, for each $k > 0$, the quotient complex $\filtr{k}{Q}_\bullet \eqdef \filtr{k}{E}_\bullet / \filtr{k-1}{E}_\bullet$ is acyclic.
	For $k = 1$, by Lemma \ref{lem:characterisation_of_E1} combined with Lemma \ref{lem:E0_acyclic}, we have that $\filtr{1}{Q}_\bullet$ is the quotient of an acyclic complex by an acyclic subcomplex, so it is acyclic.
	We will let $\derbar$ denote its differential.

	For $k > 1$, by Proposition \ref{prop:unique_factorisation}, each generator of $\filtr{k}{Q}_\bullet$ is uniquely of the form $(p, b_\bullet) \pcirc_k \coarse_m^c$ where $m > 0$ or $c = 1$, and $(p, b_\bullet)$ has level strictly lower than $k$; observe that the $\coarse_m^c$ appearing in this expression correspond bijectively to the generators of $\filtr{1}{Q}_\bullet$.
	For each $j \in \NN$, restricting $\filtr{k}{Q}_\bullet$ to generators $(p, b_\bullet) \pcirc_k \coarse_m^c$ with $(p, b_\bullet)$ of degree at most $j$ determines a subcomplex $\filtr{k, j}{Q}_\bullet$, so we have a filtration of chain complexes
\begin{equation} \label{eq:base_order_filtration}
\begin{tikzcd}
	{\filtr{k,0}{Q}_\bullet} & {\filtr{k,1}{Q}_\bullet} & \ldots & {\filtr{k,j}{Q}_\bullet} & \ldots
	\arrow[hook, from=1-1, to=1-2]
	\arrow[hook, from=1-2, to=1-3]
	\arrow[hook, from=1-3, to=1-4]
	\arrow[hook, from=1-4, to=1-5]
\end{tikzcd}
\end{equation}
	which is finite in each degree.
	Moreover, if $(p, b_\bullet)$ has degree exactly $j$,
	\begin{equation} \label{eq:assoc_graded_differential}
		\der \left( (p, b_\bullet) \pcirc_k \coarse_m^c \right) = (-)^{j+1-k}(p, b_\bullet) \pcirc_k \derbar \coarse_m^c \mod \filtr{k, j-1}{Q}_{j+m}
	\end{equation}
	in $\filtr{k, j}{Q}_\bullet$.
	Thus, the generators of the form $(p, b_\bullet) \pcirc_k \coarse_m^c$ for fixed $(p, b_\bullet)$ of degree $j$ span a subcomplex $\filtr{k, p, b_\bullet}{Q}_\bullet$ of $\filtr{k,j}{Q}_\bullet / \filtr{k, j-1}{Q}_\bullet$, such that
	\[
		\filtr{k, j}{Q}_\bullet / \filtr{k, j-1}{Q}_\bullet = \bigoplus_{(p, b_\bullet) \in \filtr{k-1}{\PNCP}(j)} \filtr{k, p, b_\bullet}{Q}_\bullet.
	\]
	Finally, let $\epsilon \eqdef (-)^{j+1-k}$. 
	By equation (\ref{eq:assoc_graded_differential}), the sequence of maps
	\[
		\coarse_m^c \mapsto \epsilon^m (p, b_\bullet) \pcirc_k \coarse_m^c
	\]
	establishes an isomorphism of chain complexes between $\filtr{1}{Q}_\bullet$ shifted by $j$ degrees and $\filtr{k, p, b_\bullet}{Q}_\bullet$.
	Since the former is acyclic, so is the latter.
	Thus the associated graded complex of $\filtr{k}{Q}_\bullet$ with respect to the filtration (\ref{eq:base_order_filtration}) is acyclic, hence---the filtration being finite in each degree---$\filtr{k}{Q}_\bullet$ is acyclic, and we conclude.
\end{proof}

\section{Towards the coherent self-dual \omegat-equivalence} \label{sec:equivalence}

\noindent
We recall some basic facts about strict $\omega$\nbd categories, partly in order to fix notations, which follow the conventions of \cite{hadzihasanovic2024combinatorics}.

\begin{dfn}[Strict $\omega$\nbd category]
	A \emph{strict $\omega$\nbd category} is a set $X$ together with, for all $n \in \NN$, \emph{boundary} operators $\bd{n}{-}, \bd{n}{+}\colon X \to X$, and, letting
	\[
		X \times_n X \eqdef \set{ (x, y) \in X \times X \mid \bd{n}{+}x = \bd{n}{-}y },
	\]
	\emph{$n$\nbd composition} operations $- \cp{n} -\colon X \times_n X \to X$, satisfying the axioms
	\begin{align*}
		& \text{there exists $n \in \NN$ such that $\bd{n}{-}x = \bd{n}{+}x = x$} 
			&& \text{(finite dimension)}, \\
		& \bd{n}{\a}\bd{k}{\b}x = 
			\begin{cases} 
				\bd{n}{\a}x & \text{if $n < k$}, \\
				\bd{k}{\b}x & \text{if $n \geq k$}
			\end{cases}
			&& \text{(globularity)}, \\
		& \bd{n}{\a}(x \cp{k} y) = 
			\begin{cases}
				\bd{n}{\a}x = \bd{n}{\a}y & \text{if $n < k$}, \\
				\bd{k}{-}x & \text{if $n = k$, $\a = -$}, \\
				\bd{k}{+}y & \text{if $n = k$, $\a = +$}, \\
				\bd{n}{\a} x \cp{k} \bd{n}{\a} y & \text{if $n > k$}
			\end{cases}
			&& \text{(boundary)}, \\
		& (x \cp{n} y) \cp{n} z = x \cp{n} (y \cp{n} z)
			&& \text{(associativity)}, \\
		& x \cp{n} \bd{n}{+}x = \bd{n}{-}x \cp{n} x = x
			&& \text{(unitality)}, \\
		& (x \cp{n+m} x') \cp{n} (y \cp{n+m} y') = (x \cp{n} y) \cp{n+m} (x' \cp{n} y')
			&& \text{(interchange)}
	\end{align*}
	for all $x, y, x', y', z \in X$, $\a, \b \in \set{-, +}$, $n, k \in \NN$, $m > 0$ such that the relevant compositions are defined.
\end{dfn}

\noindent The elements of a strict $\omega$\nbd category $X$ are called \emph{cells}.
Defining the \emph{dimension} of $x \in X$ to be
	\[
		\dim x \eqdef \min \set{n \in \NN \mid \bd{n}{-}x = \bd{n}{+}x = x}
	\]
induces a grading $X = \coprod_{n \in \NN} X_n$ on $X$.
If $x$ has dimension $n > 0$, we write $x\colon a \to b$ to specify that $a = \bd{n-1}{-}x$ and $b = \bd{n-1}{+}x$.
Notice that, in this presentation of strict $\omega$\nbd categories, there are no explicit unit operations; instead, the $n$\nbd boundaries of a cell can be cells of strictly lower dimension $k < n$.

For each $n \in \NN$, the \emph{$n$\nbd skeleton} of $X$ is the sub-$\omega$\nbd category $\skel{n}{X}$ obtained by restricting $X$ to its cells of dimension $\leq n$.
We say that $X$ is a \emph{strict $n$\nbd category} if it is equal to its $n$\nbd skeleton.

We let $\omegaCat$ denote the category of strict $\omega$\nbd categories and \emph{strict functors}, which are maps of the sets of cells which commute with boundaries and compositions, and $n\Cat$ its full subcategory on the strict $n$\nbd categories.
Restriction to the $n$\nbd skeleton determines a functor $\skel{n}\colon \omegaCat \to n\Cat$ which is right adjoint to the inclusion $\iota_n\colon n\Cat \incl \omegaCat$; there is also a left adjoint
\[
	\trunc{n}\colon \omegaCat \to n\Cat,
\]
called the \emph{$n$\nbd truncation}, or sometimes the ``intelligent'' $n$\nbd truncation \cite[\S 1.2]{ara2020joint}.
For each strict $\omega$\nbd category $X$, $\trunc{n}{X}$ is obtained as the quotient of $\skel{n}{X}$ by the equations $\bd{}{-}x = \bd{}{+}x$ for all $x \in X_{n+1}$.

There is a sequence $\globe{n}$, $n \in \NN$, of $\omega$\nbd categories called the \emph{globes}, with the property that strict functors $\globe{n} \to X$ correspond bijectively to cells of $\skel{n}{X}$.
For each $n \in \NN$, the $n$\nbd globe is a strict $n$\nbd category; we let $\bd{}{}\globe{n} \eqdef \skel{n-1}{\globe{n}}$.
Another description of the globes is that they are generated from the terminal category $1$ by iterated \emph{suspensions}.
The suspension $\sus{X}$ of a strict $\omega$\nbd category has exactly two 0\nbd cells $\bot^-, \bot^+$, its 1\nbd cells are of the form $\sus{x}\colon \bot^- \to \bot^+$ for $x$ a 0\nbd cell of $X$, and for $n > 0$, its $(n+1)$\nbd cells are of the form $\sus{x}\colon \sus{a} \to \sus{b}$ for $x\colon a \to b$ an $n$\nbd cell of $X$.

In \cite{lafont2010folk}, the authors construct model structures on $\omegaCat$, as well as on $n\Cat$ for all $n \in \NN$, such that the truncation functors are left Quillen; these are called the \emph{folk} model structures.
The cofibrant objects in the folk model structure on $\omegaCat$ are precisely \emph{polygraphs}: these are the strict $\omega$\nbd categories that are ``dimension-wise freely composition-generated'' by a set of cells.
More formally, $X$ is a polygraph if, for all $n \in \NN$, there exists a set of \emph{generating} $n$\nbd cells $\gener{X}_n \subseteq X_n$ such that
\[
\begin{tikzcd}
	{\coprod_{x \in \gener{X}_n} \bd{}{}\globe{n}} && {\coprod_{x \in \gener{X}_n} \globe{n}} \\
	\skel{n-1}{X} && \skel{n}{X}
	\arrow[from=1-1, to=2-1]
	\arrow["{(x)_{x \in {\gener{X}_n}}}", from=1-3, to=2-3]
	\arrow[hook, from=2-1, to=2-3]
	\arrow[hook, from=1-1, to=1-3]
	\arrow["\lrcorner"{anchor=center, pos=0.125, rotate=180}, draw=none, from=2-3, to=1-1]
\end{tikzcd}
\]
is a pushout diagram in $\omegaCat$; that is, $\skel{n}{X}$ is a ``cellular extension'' of $\skel{n-1}{X}$, with the inclusions $\bd{}{}\globe{n} \to \globe{n}$ as models of cells and cell boundaries.

\begin{dfn}[Linearisation of a strict $\omega$\nbd category]
	Let $X$ be a strict $\omega$\nbd category.
	The \emph{linearisation} of $X$ is the augmented chain complex $\linea{X}_\bullet$ with
	\[
		\linea{X}_n \eqdef \frac{
		\ZZ (\skel{n}{X}) } {
		\spanset{ t \cp{k} u - t - u \mid \text{$t, u \in \skel{n}{X}$, $k < n$} } }
	\]
	for each $n \in \NN$, with the differential maps defined on generators by
	\[
		x \mapsto \bd{n-1}{+}x - \bd{n-1}{-}x
	\]
	for all $x \in \skel{n}{X}$, and the augmentation $x \mapsto 1$ for all $x \in \skel{0}{X}$.
\end{dfn}

\noindent
In particular, when $X$ is a polygraph, for each $n \in \NN$ the group $\linea{X}_n$ is free on the set $\gener{X}_n$ of generating $n$\nbd cells.

We are ready to state our main conjecture.
To avoid redundant statements, we introduce the conventions
\[
	\filtr{\omega}{E_\bullet} \eqdef E_\bullet, \quad \filtr{\omega}{\PNCP}(n) \eqdef \PNCP(n), \quad \trunc{\omega} \eqdef \idd{\omegaCat}.
\]

\begin{conj} \label{conj:main_conjecture}
	There exists a polygraph $\omega E$, together with a filtration of cofibrations
\[\begin{tikzcd}
	{0 E} & {1 E} & \ldots & {k E} & \ldots
	\arrow[hook, from=1-1, to=1-2]
	\arrow[hook, from=1-2, to=1-3]
	\arrow[hook, from=1-3, to=1-4]
	\arrow[hook, from=1-4, to=1-5]
\end{tikzcd}
\]
whose colimit is $\omega E$, satisfying the following properties: for all $k \in \NN \cup \set{\omega}$,
\begin{enumerate}
		\item $k E$ has exactly two 0-cells $s, t$,
		\item for all $n \in \NN$, the set of generating $(n+1)$\nbd cells of $k E$ is in bijection with $\filtr{k}{\PNCP}(n)$,
		\item $\linea{(k E)}_\bullet$ is isomorphic to $\filtr{k}{E}_\bullet$,
		\item $\trunc{k}{(k E)}$ is contractible in the folk model structure on $k \Cat$.
	\end{enumerate}
	Moreover, there exists an involutive automorphism $\sigma\colon \omega E \to \omega E$ which swaps $s$ and $t$, and restricts to an automorphism of $k E$ for each $k > 0$, such that $\linea{(\sigma)} = \tau\colon E_\bullet \to E_\bullet$.
\end{conj}

\noindent
The properties would imply that, if $1$ is the terminal strict $\omega$\nbd category with a single 0\nbd cell, then the unique strict functor $1 \coprod 1 \to 1$ factors as the cofibration $1 \coprod 1 \incl \omega E$ classifying the pair of 0-cells $(s, t)$ followed by an acyclic fibration $\omega E \afib 1$; such an $\omega$\nbd category classifies $\omega$\nbd equivalences by \cite[Proposition 20.4.5]{ara2025polygraphs}, so $\omega E$ is a model of the coherent walking $\omega$\nbd equivalence, equivalent to the one produced in \cite{hadzihasanovic2025model}, but \emph{self-dual}.

\begin{comm}
	As first studied by Steiner \cite{steiner2004omega}, the linearisation functor can be seen as taking values in a category of augmented \emph{directed} chain complexes, where the abelian group in each degree is decorated with a distinguished submonoid of ``directed chains''.
	Steiner identified a class of augmented directed chain complexes, nowadays known as \emph{Steiner complexes}, on which the right adjoint $\nufun{}$ to linearisation is full and faithful and takes values in polygraphs.
	However, while $E_\bullet$ certainly admits a natural structure of augmented directed chain complex, it is as far away as possible from being a Steiner complex; in particular, $\nufun{E_\bullet}$ is not a polygraph.
\end{comm}

\noindent The rest of the article will consist of evidence for Conjecture \ref{conj:main_conjecture}.

\begin{evid}
	It follows from \cite[Theorem 6.1]{metayer2003resolutions} that, if $\omega E$ is contractible as a strict $\omega$\nbd category, then $\linea{(\omega E)}_\bullet$ must be acyclic.
	Thus $E_\bullet$ fits, at least, this requirement for the linearisation of a coherent walking $\omega$\nbd equivalence.
	While quasi-isomorphism of linearisations is much coarser than equivalence of strict $\omega$\nbd categories, this criterion could be used to disprove coherence of the ``naive'' walking $\omega$\nbd equivalence, see \cite[Proposition 1.3.2]{ozornova2024equivalence}. 
\end{evid}

\begin{evid}
	We give full explicit models for $0 E \incl 1 E$.
	A model for $0 E$ is the ``walking arrow'' 1\nbd category with two 0-cells $s, t$ and a single 1-cell $s \to t$, which we may label $\coarse_0^0$, the only noncrossing partition of level 0.
	Then $\linea{(0 E)}_\bullet = \filtr{0}{E}_\bullet$ is a simple check.
	Furthermore, $\tau_{\leq 0}(0 E)$ consists of a single 0\nbd cell and is a terminal object in $0\Cat \simeq \Set$.

	For $1 E$, we recall that there is a left adjoint functor $\orient\colon \sSet \to \omegaCat$ given by extending Street's cosimplicial object of \emph{orientals} \cite{street1987algebra} along colimits.
	This has the property that $\orient K$ is a polygraph whose generating $n$\nbd cells are in bijection with the nondegenerate $n$\nbd simplices of $K$, and the triangle of functors
\begin{equation} \label{eq:oriental_linearisation}
\begin{tikzcd}
	\sSet && \omegaCat \\
	& \chaug
	\arrow["\orient", from=1-1, to=1-3]
	\arrow["\NSimp"', from=1-1, to=2-2]
	\arrow["{\linea{}}", from=1-3, to=2-2]
\end{tikzcd}
\end{equation}
	commutes up to natural isomorphism.
	We let $1 E \eqdef \orient (\cosk_0 \set{s, t})$.
	Then Lemma \ref{lem:characterisation_of_E1} in combination with (\ref{eq:oriental_linearisation}) gives us an isomorphism between $\linea{(1 E)}_\bullet$ and $\filtr{1}{E}_\bullet$; moreover, the proof of Lemma \ref{lem:characterisation_of_E1} provides the bijection between generating cells of $1 E$ in dimension $n > 0$ and polarised noncrossing partitions of degree $n -1$ and level $\leq 1$.
	
	Let us look explicitly at the first few dimensions of $1 E$; we will identify generating cells in dimension $> 0$ with their corresponding polarised noncrossing partitions.
	To depict cells of dimension $\leq 2$, we will use 2\nbd categorical string diagrams as in \cite{hinze2023introducing}; to depict cells of dimension $> 2$, we will use pasting diagrams in hom-$\omega$\nbd categories between 2\nbd cells.
	Thus 3\nbd cells will be arrows between string diagrams and 4\nbd cells will be 2\nbd arrows between 3\nbd cells.
	In our string diagrams, 0\nbd composition proceeds left-to-right and 1\nbd composition proceeds bottom-to-top.

	We depict the 0\nbd generators $s$, $t$ as light, respectively dark regions:
\[
s = 
\begin{tikzpicture}[baseline={([yshift=-.5ex]current bounding box.center)},scale=.5]
	\path[fill, color=\lightregion] (-1,-1) rectangle (1,1);
\end{tikzpicture}\;, \quad
t =
\begin{tikzpicture}[baseline={([yshift=-.5ex]current bounding box.center)},scale=.5]
	\path[fill, color=\darkregion] (-1,-1) rectangle (1,1);
\end{tikzpicture}\;.
\]
	The 1\nbd generators $f \eqdef \coarse_0^0\colon s \to t$ and $g \eqdef \coarse_0^1\colon t \to s$ are boundaries between light and dark regions:
\[
f = 
\begin{tikzpicture}[baseline={([yshift=-.5ex]current bounding box.center)},scale=.5]
	\path[fill, color=\lightregion] (-1,-1) rectangle (1,1);
	\path[fill, color=\darkregion] (0,-1) rectangle (1,1);
\end{tikzpicture}\;, \quad
g =
\begin{tikzpicture}[baseline={([yshift=-.5ex]current bounding box.center)},scale=.5]
	\path[fill, color=\lightregion] (-1,-1) rectangle (1,1);
	\path[fill, color=\darkregion] (-1,-1) rectangle (0,1);
\end{tikzpicture}\;.
\]
The 2\nbd generators are $\pncp{ \ncstick{1} }\colon s \to f \cp{0} g$ and $\pncp{ \ncstick{1} \ncpol{1} }\colon t \to g \cp{0} f$:
\[
	\pncp{ \ncstick{1} } =
\begin{tikzpicture}[baseline={([yshift=-.5ex]current bounding box.center)},scale=.5]
	\path[fill, color=\lightregion] (-1,-1) rectangle (1,1);
	\path[fill, color=\darkregion] (-.75,1) to [out=-90,in=180] (0,.25) to [out=0,in=-90] (.75,1) -- cycle;
\end{tikzpicture}\;, \quad
\pncp{ \ncstick{1} \ncpol{1} } = 
\begin{tikzpicture}[baseline={([yshift=-.5ex]current bounding box.center)},scale=.5]
	\path[fill, color=\lightregion] (-1,-1) rectangle (1,1);
	\path[fill, color=\darkregion] (-1,-1) to (-1,1) to (-.75,1) to [out=-90,in=180] (0,.25) to [out=0,in=-90] (.75,1) to (1,1) to (1,-1) -- cycle;
\end{tikzpicture}\;.
\]
The 3\nbd generators are $\pncp{ \nccup{1}{2} } \colon \pncp{ \ncstick{1} } \cp{0} f \to f \cp{0} \pncp{ \ncstick{1} \ncpol{1} }$ and $\pncp{ \nccup{1}{2} \ncpol{1} }\colon \pncp { \ncstick{1} \ncpol{1} } \cp{0} g \to g \cp{0} \pncp{ \ncstick{1} }$:
\[\begin{tikzcd}
	{
\begin{tikzpicture}[baseline={([yshift=-.5ex]current bounding box.center)},scale=.5]
	\path[fill, color=\lightregion] (-1.5,-1) rectangle (1.5,1);
	\path[fill, color=\darkregion] (-1,1) to [out=-90,in=180] (-.5,.5) to [out=0,in=-90] (0,1) -- cycle;
	\path[fill, color=\darkregion] (1,1) to [out=-90,in=90] (0,-1) -- (1.5,-1) -- (1.5,1) -- cycle;
\end{tikzpicture}
	}
	 &&
	{ 
\begin{tikzpicture}[baseline={([yshift=-.5ex]current bounding box.center)},scale=.5]
	\path[fill, color=\lightregion] (-1.5,-1) rectangle (1.5,1);
	\path[fill, color=\darkregion] (0,-1) to [out=90,in=-90] (-1,1) to (0,1) to [out=-90,in=180] (.5,.5) to [out=0,in=-90] (1,1) -- (1.5,1) -- (1.5,-1) -- cycle;
\end{tikzpicture}\;,
	}
	\arrow["{\pncp{ \nccup{1}{2} }}", from=1-1, to=1-3]
\end{tikzcd}\]
\[\begin{tikzcd}
	{
\begin{tikzpicture}[baseline={([yshift=-.5ex]current bounding box.center)},scale=.5]
	\path[fill, color=\lightregion] (-1.5,-1) rectangle (1.5,1);
	\path[fill, color=\darkregion] (0,-1) to [out=90,in=-90] (1,1) to (0,1) to [out=-90,in=0] (-.5,.5) to [out=180,in=-90] (-1,1) -- (-1.5,1) -- (-1.5,-1) -- cycle;
\end{tikzpicture}
	}
	 &&
	{ 
\begin{tikzpicture}[baseline={([yshift=-.5ex]current bounding box.center)},scale=.5]
	\path[fill, color=\lightregion] (-1.5,-1) rectangle (1.5,1);
	\path[fill, color=\darkregion] (0,1) to [out=-90,in=180] (.5,.5) to [out=0,in=-90] (1,1) -- cycle;
	\path[fill, color=\darkregion] (-1,1) to [out=-90,in=90] (0,-1) -- (-1.5,-1) -- (-1.5,1) -- cycle;
\end{tikzpicture}\;.
	}
	\arrow["{\pncp{ \nccup{1}{2} \ncpol{1} }}", from=1-1, to=1-3]
\end{tikzcd}\]
The 4\nbd generators are 
\begin{align*}
	\pncp{ \nccup{1}{3} \ncstick{2} }\colon & \pncp{ \ncstick{1} } \cp{0} \pncp{ \ncstick{1} } \to \left(\pncp{ \ncstick{1} } \cp{1} \left(\pncp{ \nccup{1}{2} } \cp{0} g\right)\right) \cp{2} \left(\pncp{ \ncstick{1} } \cp{1} \left(f \cp{0} \pncp{ \nccup{1}{2} \ncpol{1} }\right)\right), \\
	\pncp{ \nccup{1}{3} \ncstick{2} \ncpol{1} }\colon & \pncp{ \ncstick{1} \ncpol{1} } \cp{0} \pncp{ \ncstick{1} \ncpol{1} } \to \left(\pncp{ \ncstick{1} \ncpol{1} } \cp{1} \left(\pncp{ \nccup{1}{2}\ncpol{1} } \cp{0} f\right)\right) \cp{2} \left(\pncp{ \ncstick{1} \ncpol{1} } \cp{1} \left(g \cp{0} \pncp{ \nccup{1}{2} }\right)\right),
\end{align*}
\[\begin{tikzcd}
	& 
	{
\begin{tikzpicture}[baseline={([yshift=-.5ex]current bounding box.center)},scale=.5]
	\path[fill, color=\lightregion] (-1.75,-1) rectangle (1.75,1);
	\path[fill, color=\darkregion] (-1.5,1) to [out=-90,in=180] (0,-.5) to [out=0,in=-90] (1.5,1) -- (.5, 1) to [out=-90, in=0] (0,.5) to [out=180, in=-90] (-.5,1) -- cycle;
\end{tikzpicture}
}& \\
	{
\begin{tikzpicture}[baseline={([yshift=-.5ex]current bounding box.center)},scale=.5]
	\path[fill, color=\lightregion] (-1.75,-1) rectangle (1.75,1);
	\path[fill, color=\darkregion] (-1.5,1) to [out=-90,in=180] (-1,.5) to [out=0,in=-90] (-.5,1) -- cycle;
	\path[fill, color=\darkregion] (.5,1) -- (.5,.5) to [out=-90,in=180] (1, 0) to [out=0, in=-90] (1.5,.5) -- (1.5,1) -- cycle;
\end{tikzpicture}
}
&& {
\begin{tikzpicture}[baseline={([yshift=-.5ex]current bounding box.center)},scale=.5]
	\path[fill, color=\lightregion] (-1.75,-1) rectangle (1.75,1);
	\path[fill, color=\darkregion] (.5,1) to [out=-90,in=180] (1,.5) to [out=0,in=-90] (1.5,1) -- cycle;
	\path[fill, color=\darkregion] (-1.5,1) -- (-1.5,.5) to [out=-90,in=180] (-1, 0) to [out=0, in=-90] (-.5,.5) -- (-.5,1) -- cycle;
\end{tikzpicture}\;,
}
	\arrow["{\pncp{ \ncstick{1} } \cp{1} (f \cp{0} \pncp{ \nccup{1}{2} \ncpol{1} })}", from=1-2, to=2-3]
	\arrow["{\pncp{ \ncstick{1} } \cp{1} (\pncp{ \nccup{1}{2} } \cp{0} g)}", from=2-1, to=1-2]
	\arrow[""{name=0, anchor=center, inner sep=0}, "{\pncp{ \ncstick{1} } \cp{0} \pncp{ \ncstick{1} }}"', from=2-1, to=2-3]
	\arrow["{\pncp{ \nccup{1}{3} \ncstick{2} }}"', between={0.2}{1}, Rightarrow, from=0, to=1-2]
\end{tikzcd}\]
\[\begin{tikzcd}
	& 
	{
\begin{tikzpicture}[baseline={([yshift=-.5ex]current bounding box.center)},scale=.5]
	\path[fill, color=\lightregion] (-1.75,-1) rectangle (1.75,1);
	\path[fill, color=\darkregion] (-.5, 1) to [out=-90, in=180] (0, .5) to [out=0, in=-90] (.5, 1) -- cycle;
	\path[fill, color=\darkregion] (-1.75, -1) -- (-1.75, 1) -- (-1.5, 1) to [out=-90, in=180] (0, -.5) to [out=0, in=-90] (1.5, 1) -- (1.75, 1) -- (1.75, -1) -- cycle;
\end{tikzpicture}
}& \\
	{
\begin{tikzpicture}[baseline={([yshift=-.5ex]current bounding box.center)},scale=.5]
	\path[fill, color=\lightregion] (-1.75,-1) rectangle (1.75,1);
	\path[fill, color=\darkregion] (-1.75, -1) -- (-1.75, 1) -- (-1.5, 1) to [out=-90, in=180] (-1, .5) to [out=0, in=-90] (-.5, 1) -- (.5, 1) -- (.5, .5) to [out=-90, in=180] (1, 0) to [out=9, in=-90] (1.5, .5) -- (1.5, 1) -- (1.75, 1) -- (1.75, -1) -- cycle;
\end{tikzpicture}
}
&& {
\begin{tikzpicture}[baseline={([yshift=-.5ex]current bounding box.center)},scale=.5]
	\path[fill, color=\lightregion] (-1.75,-1) rectangle (1.75,1);
	\path[fill, color=\darkregion] (-1.75, -1) -- (-1.75, 1) -- (-1.5, 1) -- (-1.5, .5) to [out=-90, in=180] (-1, 0) to [out=0, in=-90] (-.5, .5) -- (-.5, 1) -- (.5, 1) to [out=-90, in=180] (1, .5) to [out=0, in=-90] (1.5, 1) -- (1.75, 1) -- (1.75, -1) -- cycle;
\end{tikzpicture}\;.
}
	\arrow["{\pncp{ \ncstick{1} \ncpol{1} } \cp{1} (g \cp{0} \pncp{ \nccup{1}{2} })}", from=1-2, to=2-3]
	\arrow["{\pncp{ \ncstick{1} \ncpol{1} } \cp{1} (\pncp{ \nccup{1}{2} \ncpol{1} } \cp{0} f)}", from=2-1, to=1-2]
	\arrow[""{name=0, anchor=center, inner sep=0}, "{\pncp{ \ncstick{1} \ncpol{1} } \cp{0} \pncp{ \ncstick{1} \ncpol{1} }}"', from=2-1, to=2-3]
	\arrow["{\pncp{ \nccup{1}{3} \ncstick{2} \ncpol{1} }}"', between={0.2}{1}, Rightarrow, from=0, to=1-2]
\end{tikzcd}\]
One can observe that, indeed, the map $\sigma$ defined by $s \mapsto t$, $t \mapsto s$, and $\coarse_m^b \mapsto \coarse_m^{b \oplus 1}$ for all $m \in \NN$ and $b \in \set{0, 1}$ is an involutive automorphism of $1 E$; formally, this is induced by the functoriality of $\orient(\cosk_0 -)$ applied to the automorphism of $\set{s, t}$ exchanging $s$ and $t$.

The 1\nbd truncation $\trunc{1}{(1 E)}$ is the 1\nbd category generated by the 0-cells $s$, $t$ and the 1-cells $f\colon s \to t$, $g\colon t \to s$ subject to the equations $s = f \cp{0} g$ and $t = g \cp{0} f$ determined by the 2\nbd generators $\pncp{ \ncstick{1} }$ and $\pncp{ \ncstick{1} \ncpol{1} }$, respectively. 
This is precisely the \emph{walking isomorphism}, that is, the indiscrete groupoid on two objects, which as a category is equivalent to the terminal category, or equivalently, is contractible in the folk model structure on $1\Cat \simeq \Cat$.
We note that one can alternatively describe $\cosk_0 \set{s, t}$ as the nerve of the walking isomorphism. 
\end{evid}

\begin{evid} \label{evid:outergap}
Suppose that $(p, b_\bullet)$ is a polarised noncrossing partition of degree $k$, for which we have already given a model as a generating $(k+1)$\nbd cell of $\omega E$.
Then we can give an explicit model for the entire family of cells corresponding to the polarised noncrossing partitions
\begin{equation} \label{eq:suspension_family}
	(p, b_\bullet) \pcirc_{k+1} \coarse_m^c
\end{equation}
for $m \in \NN$ and $c \in \set{0, 1}$, as follows.
The cell $(p, b_\bullet)$ is classified by a strict functor $(p, b_\bullet)\colon \globe{k+1} \to \omega E$, which factors through the smallest sub-polygraph $X$ containing $(p, b_\bullet)$.
Now, $\globe{k+1}$ is isomorphic to the $k$\nbd fold suspension $\fun{S}^k \globe{1}$ of the 1\nbd globe, which is the walking arrow, that is, our $0 E$.
By functoriality of suspension applied to the inclusion $0 E \incl 1 E$, we have an inclusion $\fun{S}^k (0 E) \to \fun{S}^k (1 E)$.
We propose that the smallest subpolygraph $X'$ of $\omega E$ containing the entire family of cells (\ref{eq:suspension_family}) fits in a pushout diagram
\[\begin{tikzcd}
	{\fun{S}^k(0 E)} & {\fun{S}^k(1 E)} \\
	X & {X'}
	\arrow[hook, from=1-1, to=1-2]
	\arrow["{(p, b_\bullet)}", from=1-1, to=2-1]
	\arrow["{F_{(p, b_\bullet)}}", from=1-2, to=2-2]
	\arrow[hook, from=2-1, to=2-2]
	\arrow["\lrcorner"{anchor=center, pos=0.125, rotate=180}, draw=none, from=2-2, to=1-1]
\end{tikzcd}\]
of strict $\omega$\nbd categories; in particular, if $\coarse_m^c\colon O^{m+1} \to 1E$ classifies the generator $\coarse_m^c$ of $1 E$, the generator $(p, b_\bullet) \pcirc_{k+1} \coarse_m^c$ is classified by 
\[
	F_{(p, b_\bullet)} \circ \fun{S}^k(\coarse_m^c)\colon O^{m+k+1} \to X'.
\]
In particular, because $\coarse_0^0\colon s \to t$ and $\coarse_0^1\colon t \to s$ in $1 E$, if $(p, b_\bullet)\colon x \to y$ in $X$, then $(p, b_\bullet) \pcirc_{k+1} \coarse_0^1\colon y \to x$ in $X'$, exhibiting the weak inverse of $(p, b_\bullet)$.

This \emph{ansatz} allows us to systematically exhibit several other low-dimensional generators of $\omega E$ outside the image of $1 E$: for example, the 2\nbd generators
\[
	\pncp{ \ncstick{1} \ncpolfin{1} } =
\begin{tikzpicture}[baseline={([yshift=-.5ex]current bounding box.center)},scale=.5]
	\path[fill, color=\lightregion] (-1,-1) rectangle (1,1);
	\path[fill, color=\darkregion] (-.75,-1) to [out=90,in=180] (0,-.25) to [out=0,in=90] (.75,-1) -- cycle;
\end{tikzpicture}\;, \quad
\pncp{ \ncstick{1} \ncpol{1} \ncpolfin{1} } = 
\begin{tikzpicture}[baseline={([yshift=-.5ex]current bounding box.center)},scale=.5]
	\path[fill, color=\lightregion] (-1,-1) rectangle (1,1);
	\path[fill, color=\darkregion] (-1,1) to (-1,-1) to (-.75,-1) to [out=90,in=180] (0,-.25) to [out=0,in=90] (.75,-1) to (1,-1) to (1,1) -- cycle;
\end{tikzpicture}\;,
\]
as well as the 3\nbd generators
\[\begin{tikzcd}
	{
\begin{tikzpicture}[baseline={([yshift=-.5ex]current bounding box.center)},scale=.5]
	\path[fill, color=\lightregion] (-1,-1) rectangle (1,1);
\end{tikzpicture}
	}
	 &&
	{ 
\begin{tikzpicture}[baseline={([yshift=-.5ex]current bounding box.center)},scale=.5]
	\path[fill, color=\lightregion] (-1,-1) rectangle (1,1);
	\path[fill, color=\darkregion] (-.75,0) to [out=90,in=180] (0,.75) to [out=0,in=90] (.75,0) to [out=-90, in=0] (0, -.75) to [out=180, in=-90] (-.75, 0);
\end{tikzpicture}\;,
	}
	\arrow["{\pncp{ \ncstick{1} \ncstick{2} }}", from=1-1, to=1-3]
\end{tikzcd}
\quad
\begin{tikzcd}
	{
\begin{tikzpicture}[baseline={([yshift=-.5ex]current bounding box.center)},scale=.5]
	\path[fill, color=\darkregion] (-1,-1) rectangle (1,1);
\end{tikzpicture}
	}
	 &&
	{ 
\begin{tikzpicture}[baseline={([yshift=-.5ex]current bounding box.center)},scale=.5]
	\path[fill, color=\darkregion] (-1,-1) rectangle (1,1);
	\path[fill, color=\lightregion] (-.75,0) to [out=90,in=180] (0,.75) to [out=0,in=90] (.75,0) to [out=-90, in=0] (0, -.75) to [out=180, in=-90] (-.75, 0);
\end{tikzpicture}\;,
	}
	\arrow["{\pncp{ \ncstick{1} \ncpol{1} \ncstick{2} }}", from=1-1, to=1-3]
\end{tikzcd}\]
\[\begin{tikzcd}
	{
\begin{tikzpicture}[baseline={([yshift=-.5ex]current bounding box.center)},scale=.5]
	\path[fill, color=\lightregion] (-1,-1) rectangle (1,1);
	\path[fill, color=\darkregion] (-.75, -1) rectangle (.75, 1);
\end{tikzpicture}
	}
	 &&
	{ 
\begin{tikzpicture}[baseline={([yshift=-.5ex]current bounding box.center)},scale=.5]
	\path[fill, color=\lightregion] (-1,-1) rectangle (1,1);
	\path[fill, color=\darkregion] (-.75,-1) to [out=90,in=180] (0,-.25) to [out=0,in=90] (.75,-1) -- cycle;
	\path[fill, color=\darkregion] (-.75,1) to [out=-90,in=180] (0,.25) to [out=0,in=-90] (.75,1) -- cycle;
\end{tikzpicture}\;,
	}
	\arrow["{\pncp{ \ncstick{1} \ncstick{2} \ncpol{2} }}", from=1-1, to=1-3]
\end{tikzcd}
\quad
\begin{tikzcd}
	{
\begin{tikzpicture}[baseline={([yshift=-.5ex]current bounding box.center)},scale=.5]
	\path[fill, color=\lightregion] (-1,-1) rectangle (1,1);
	\path[fill, color=\darkregion] (-1, -1) rectangle (-.75, 1);
	\path[fill, color=\darkregion] (.75, -1) rectangle (1, 1);
\end{tikzpicture}
	}
	 &&
	{ 
\begin{tikzpicture}[baseline={([yshift=-.5ex]current bounding box.center)},scale=.5]
	\path[fill, color=\lightregion] (-1,-1) rectangle (1,1);
	\path[fill, color=\darkregion] (-1, -1) -- (-.75, -1) to [out=90, in=180] (0, -.25) to [out=0, in=90] (.75, -1) -- (1, -1) -- (1, 1) -- (.75, 1) to [out=-90, in=0] (0, .25) to [out=180, in=-90] (-.75, 1) -- (-1, 1) -- cycle;
\end{tikzpicture}\;,
	}
	\arrow["{\pncp{ \ncstick{1} \ncpol{1} \ncstick{2} \ncpol{2} }}", from=1-1, to=1-3]
\end{tikzcd}\]
and 4\nbd generators including
\[\begin{tikzcd}[column sep=large]
	{
\begin{tikzpicture}[baseline={([yshift=-.5ex]current bounding box.center)},scale=.5]
	\path[fill, color=\lightregion] (-1,-1.5) rectangle (1,1.5);
	\path[fill, color=\darkregion] (-.75, 1.5) -- (-.75,.75) to [out=-90,in=180] (0,0) to [out=0,in=-90] (.75,.75) -- (.75, 1.5) -- cycle;
\end{tikzpicture}
	} && {
\begin{tikzpicture}[baseline={([yshift=-.5ex]current bounding box.center)},scale=.5]
	\path[fill, color=\lightregion] (-1,-1.5) rectangle (1,1.5);
	\path[fill, color=\darkregion] (-.75,1.5) to [out=-90,in=180] (0,.75) to [out=0,in=-90] (.75,1.5) -- cycle;
	\path[fill, color=\darkregion] (-.75, -.5) to [out=90,in=180] (0,.25) to [out=0,in=90] (.75,-.5) to [out=-90, in=0] (0, -1.25) to [out=180, in=-90] (-.75, -.5);
\end{tikzpicture}\;,
	}
	\arrow[""{name=0, anchor=center, inner sep=0}, "{\pncp{ \ncstick{1} \ncstick{2} } \cp{1} \pncp{ \ncstick{1} }}"', curve={height=18pt}, from=1-1, to=1-3]
	\arrow[""{name=1, anchor=center, inner sep=0}, "{\pncp{ \ncstick{1} } \cp{1} \pncp{ \ncstick{1} \ncstick{2} \ncpol{2} }}", curve={height=-18pt}, from=1-1, to=1-3]
	\arrow["{\pncp{ \ncstick{1} \nccup{2}{3} }}"', between={0.2}{0.8}, Rightarrow, from=0, to=1]
\end{tikzcd}\]

\[\begin{tikzcd}
	& 
	{
\begin{tikzpicture}[baseline={([yshift=-.5ex]current bounding box.center)},scale=.5]
	\path[fill, color=\lightregion] (-1.5,-1) rectangle (1.5,1);
	\path[fill, color=\darkregion] (0,-1) to [out=90,in=-90] (-1,1) to (0,1) to [out=-90,in=180] (.5,.5) to [out=0,in=-90] (1,1) -- (1.5,1) -- (1.5,-1) -- cycle;
\end{tikzpicture}
}& \\
	{
\begin{tikzpicture}[baseline={([yshift=-.5ex]current bounding box.center)},scale=.5]
	\path[fill, color=\lightregion] (-1.5,-1) rectangle (1.5,1);
	\path[fill, color=\darkregion] (-1,1) to [out=-90,in=180] (-.5,.5) to [out=0,in=-90] (0,1) -- cycle;
	\path[fill, color=\darkregion] (1,1) to [out=-90,in=90] (0,-1) -- (1.5,-1) -- (1.5,1) -- cycle;
\end{tikzpicture}
}
&& {
\begin{tikzpicture}[baseline={([yshift=-.5ex]current bounding box.center)},scale=.5]
	\path[fill, color=\lightregion] (-1.5,-1) rectangle (1.5,1);
	\path[fill, color=\darkregion] (-1,1) to [out=-90,in=180] (-.5,.5) to [out=0,in=-90] (0,1) -- cycle;
	\path[fill, color=\darkregion] (1,1) to [out=-90,in=90] (0,-1) -- (1.5,-1) -- (1.5,1) -- cycle;
\end{tikzpicture}\;.
}
	\arrow["{\pncp{ \nccup{1}{2} \ncpolfin{2} }}", from=1-2, to=2-3]
	\arrow["{\pncp{ \nccup{1}{2} }}", from=2-1, to=1-2]
	\arrow[""{name=0, anchor=center, inner sep=0}, "{\pncp{ \ncstick{1} } \cp{0} f}"', from=2-1, to=2-3]
	\arrow["{\pncp{ \nccup{1}{2} \ncstick{3} }}"', between={0.2}{1}, Rightarrow, from=0, to=1-2]
\end{tikzcd}\]
\end{evid}

\noindent 
Beyond these cases, we do not have a systematic way of lifting generators of $E_\bullet$ to generators of $\omega E$.
It is clear, for example, that the duality induced on generators by $- \pcirc_k \coarse_0^1$ for each $k > 1$ has to do with direction-reversal of $k$\nbd cells; for example, the 3\nbd cells $\pncp{ \nccup{1}{2} \ncpol{2} }$ and $\pncp{ \nccup{1}{2} \ncpol{1} \ncpol{2} }$ should be vertical mirror images of $\pncp{ \nccup{1}{2} }$ and $\pncp{ \nccup{1}{2} \ncpol{1} }$, respectively:
\[\begin{tikzcd}
	{
\begin{tikzpicture}[baseline={([yshift=-.5ex]current bounding box.center)},scale=.5, yscale=-1]
	\path[fill, color=\lightregion] (-1.5,-1) rectangle (1.5,1);
	\path[fill, color=\darkregion] (-1,1) to [out=-90,in=180] (-.5,.5) to [out=0,in=-90] (0,1) -- cycle;
	\path[fill, color=\darkregion] (1,1) to [out=-90,in=90] (0,-1) -- (1.5,-1) -- (1.5,1) -- cycle;
\end{tikzpicture}
	}
	 &&
	{ 
\begin{tikzpicture}[baseline={([yshift=-.5ex]current bounding box.center)},scale=.5, yscale=-1]
	\path[fill, color=\lightregion] (-1.5,-1) rectangle (1.5,1);
	\path[fill, color=\darkregion] (0,-1) to [out=90,in=-90] (-1,1) to (0,1) to [out=-90,in=180] (.5,.5) to [out=0,in=-90] (1,1) -- (1.5,1) -- (1.5,-1) -- cycle;
\end{tikzpicture}\;,
	}
	\arrow["{\pncp{ \nccup{1}{2} \ncpol{2} }}", from=1-1, to=1-3]
\end{tikzcd}\]
\[\begin{tikzcd}
	{
\begin{tikzpicture}[baseline={([yshift=-.5ex]current bounding box.center)},scale=.5, yscale=-1]
	\path[fill, color=\lightregion] (-1.5,-1) rectangle (1.5,1);
	\path[fill, color=\darkregion] (0,-1) to [out=90,in=-90] (1,1) to (0,1) to [out=-90,in=0] (-.5,.5) to [out=180,in=-90] (-1,1) -- (-1.5,1) -- (-1.5,-1) -- cycle;
\end{tikzpicture}
	}
	 &&
	{ 
\begin{tikzpicture}[baseline={([yshift=-.5ex]current bounding box.center)},scale=.5, yscale=-1]
	\path[fill, color=\lightregion] (-1.5,-1) rectangle (1.5,1);
	\path[fill, color=\darkregion] (0,1) to [out=-90,in=180] (.5,.5) to [out=0,in=-90] (1,1) -- cycle;
	\path[fill, color=\darkregion] (-1,1) to [out=-90,in=90] (0,-1) -- (-1.5,-1) -- (-1.5,1) -- cycle;
\end{tikzpicture}\;.
	}
	\arrow["{\pncp{ \nccup{1}{2} \ncpol{1} \ncpol{2} }}", from=1-1, to=1-3]
\end{tikzcd}\]
However, as the case of the pair $\pncp{ \ncstick{1} \ncstick{2} }$ and $\pncp{ \ncstick{1} \ncstick{2} \ncpol{2} }$ shows, this is not simply a case of the shape of $(p, b_\bullet)$ being dual to the shape of $(p, b_\bullet) \pcirc_k \coarse_0^1$.

\begin{evid}
	With the generators that we have exhibited so far, we are able to give a full description of $\trunc{2}{(2 E)}$ as the strict 2\nbd category generated by 0\nbd cells $s$, $t$, 1\nbd cells $f$, $g$, and 2\nbd cells $\pncp{ \ncstick{1} }$, $\pncp{ \ncstick{1} \ncpol{1} }$, $\pncp{ \ncstick{1} \ncpolfin{1} }$, $\pncp{ \ncstick{1} \ncpol{1} \ncpolfin{1} }$, subject to the equations
	\begin{equation} \label{eq:naive_equivalence} \begin{aligned}
		& s = \pncp{ \ncstick{1} } \cp{1} \pncp{ \ncstick{1} \ncpolfin{1} },
		&& t = \pncp{ \ncstick{1} \ncpol{1} } \cp{1} \pncp{ \ncstick{1} \ncpol{1} \ncpolfin{1} }, \\
		& f \cp{0} g = \pncp{ \ncstick{1} \ncpolfin{1} } \cp{1} \pncp{ \ncstick{1} },
		&& g \cp{0} f = \pncp{ \ncstick{1} \ncpol{1} \ncpolfin{1} } \cp{1} \pncp{ \ncstick{1} \ncpol{1} },
	\end{aligned} \end{equation}
	determined by the 3\nbd generators $\pncp{ \ncstick{1} \ncstick{2} }$, $\pncp{ \ncstick{1} \ncstick{2} \ncpol{1} }$, $\pncp{ \ncstick{1} \ncstick{2} \ncpol{2} }$, and $\pncp{ \ncstick{1} \ncstick{2} \ncpol{1} \ncpol{2} }$, respectively, as well as the equations
	\begin{equation} \label{eq:triangle_substitutes} \begin{aligned}
		& \pncp{ \ncstick{1} } \cp{0} f = f \cp{0} \pncp{ \ncstick{1} \ncpol{1} },
		&& \pncp{ \ncstick{1} \ncpol{1} } \cp{0} g = g \cp{0} \pncp{ \ncstick{1} }, \\
		& \pncp{ \ncstick{1} \ncpolfin{1} } \cp{0} f = f \cp{0} \pncp{ \ncstick{1} \ncpol{1} \ncpolfin{1} },
		&& \pncp{ \ncstick{1} \ncpol{1} \ncpolfin{1} } \cp{0} g = g \cp{0} \pncp{ \ncstick{1} \ncpolfin{1} }, 
	\end{aligned} \end{equation}
	determined by the 3\nbd generators $\pncp{ \nccup{1}{2} }$, $\pncp{ \nccup{1}{2} \ncpol{1} }$, $\pncp{ \nccup{1}{2} \ncpol{2} }$, $\pncp{ \nccup{1}{2} \ncpol{1} \ncpol{2} }$, respectively.
	The equations (\ref{eq:naive_equivalence}) determine a strict 2\nbd category isomorphic to the ``naive'' walking equivalence of 2\nbd categories, which is not contractible \cite[Proposition 1.3.2]{ozornova2024equivalence}.
	We claim that the equations (\ref{eq:triangle_substitutes}) make $\trunc{2}{(2 E)}$ isomorphic to the walking \emph{adjoint} equivalence, which is contractible \cite[Proposition 1.3.3]{ozornova2024equivalence}.
	
	With our notation for generators, the walking adjoint equivalence is obtained from the naive walking equivalence by imposing the triangle equations
	\[
		(\pncp{ \ncstick{1} } \cp{0} f) \cp{1} (f \cp{0} \pncp{ \ncstick{1} \ncpol{1} \ncpolfin{1} }) = f, \quad \quad
		(g \cp{0} \pncp{ \ncstick{1} }) \cp{1} (\pncp{ \ncstick{1} \ncpol{1} \ncpolfin{1} } \cp{0} g) = g,
	\]
	so it suffices to show that these are mutually derivable with (\ref{eq:triangle_substitutes}), given (\ref{eq:naive_equivalence}).
	Assuming (\ref{eq:naive_equivalence}) and (\ref{eq:triangle_substitutes}), we have
	\begin{align*}
		(\pncp{ \ncstick{1} } \cp{0} f) \cp{1} (f \cp{0} \pncp{ \ncstick{1} \ncpol{1} \ncpolfin{1} }) & = (f \cp{0} \pncp{ \ncstick{1} \ncpol{1} }) \cp{1} (f \cp{0} \pncp{ \ncstick{1} \ncpol{1} \ncpolfin{1} }) = \\	
													      & = f \cp{0} (\pncp{ \ncstick{1} \ncpol{1} } \cp{1} \pncp{ \ncstick{1} \ncpol{1} \ncpolfin{1} }) = f \cp{0} t = f,
	\end{align*}
	and dually for the other triangle equation.
	Assuming the triangle equations and (\ref{eq:naive_equivalence}), we have
	\begin{align*}
		\pncp{ \ncstick{1} } \cp{0} f  & = 
		(\pncp{ \ncstick{1} } \cp{0} f) \cp{1} (f \cp{0} g \cp{0} f) = \\
					       & = (\pncp{ \ncstick{1} } \cp{0} f) \cp{1} (f \cp{0} (\pncp{ \ncstick{1} \ncpol{1} \ncpolfin{1} } \cp{1} \pncp{ \ncstick{1} \ncpol{1} } )) = \\
					       & = (\pncp{ \ncstick{1} } \cp{0} f) \cp{1} (f \cp{0} \pncp{ \ncstick{1} \ncpol{1} \ncpolfin{1} }) \cp{1} (f \cp{0} \pncp{ \ncstick{1} \ncpol{1} }) = \\ 
					       & = f \cp{1}  (f \cp{0} \pncp{ \ncstick{1} \ncpol{1} }) =  f \cp{0} \pncp{ \ncstick{1} \ncpol{1} },
	\end{align*}
	as well as
	\begin{align*} 
		\pncp{ \ncstick{1} \ncpolfin{1} } \cp{0} f & =
		\pncp{ \ncstick{1} \ncpolfin{1} } \cp{0} ((\pncp{ \ncstick{1} } \cp{0} f) \cp{1} (f \cp{0} \pncp{ \ncstick{1} \ncpol{1} \ncpolfin{1} })) = \\
							   & = ((\pncp{ \ncstick{1} \ncpolfin{1}} \cp{1} \pncp{ \ncstick{1} }) \cp{0} f) \cp{1} (f \cp{0} \pncp{\ncstick{1} \ncpol{1} \ncpolfin{1} }) = \\
							   & = (f \cp{0} g \cp{0} f) \cp{1} (f \cp{0} \pncp{\ncstick{1} \ncpol{1} \ncpolfin{1}}) = f \cp{0} \pncp{\ncstick{1} \ncpol{1} \ncpolfin{1}}.
	\end{align*}
	The arguments for the other two equations are dual.
\end{evid}

\begin{rmk}
	Because of the known coherence result for the walking adjoint equivalence, hypothetical self-dual models of higher-dimensional coherent equivalences have mainly involved generators and equations coming from weakened notions of adjunction, such as the \emph{swallowtail} equations \cite[Definition 2.3]{gurski2012biequivalences}.
	In our conjectural model, the triangle equations, as well as these higher-dimensional cognates, are instead derived from ones that are sound for equivalences, but not for adjunctions.
	Thus this approach does not seem to offer clues towards algebraic presentations of higher-dimensional lax adjunctions.
\end{rmk}

Evidence \ref{evid:outergap} provided an interpretation of \emph{outer} gap-insertions of the $\coarse_m^c$---those of the form $(p, b_\bullet) \pcirc_{k+1} \coarse_m^c$ where $(p, b_\bullet)$ has degree $k$---as productions of families of higher coherence cells determined by the cell indexed by $(p, b_\bullet)$.
The main challenge towards the proof of Conjecture \ref{conj:main_conjecture} seems to be to provide a similar interpretation of \emph{inner} gap-insertions.
In low dimensions, we have been able to synthesise ``by hand'' coherence cells indexed by polarised noncrossing partitions with inner gap-insertions, extrapolating from the differential of $E_\bullet$.
The first case arises in dimension 4 with the generator $\pncp{ \nccup{1}{3} \ncstick[0.9]{2} }$ and its duals:
\[
	\der \pncp{ \nccup{1}{3} \ncstick[0.9]{2} } = \pncp{ \ncstick{1} \ncstick{2} \ncpol{1} } - 
	\pncp{ \ncstick{1} \ncstick{2} } - \pncp{ \nccup{1}{2} } - \pncp{ \nccup{1}{2} \ncpol{2} }
\]
in $E_\bullet$, which is compatible with the linearisation of the 4\nbd cell
\[\begin{tikzcd}
	{
\begin{tikzpicture}[baseline={([yshift=-.5ex]current bounding box.center)},scale=.5]
	\path[fill, color=\lightregion] (-2,-1) rectangle (2,1);
	\path[fill, color=\darkregion] (0, -1) rectangle (2, 1);
\end{tikzpicture}
	} && {
\begin{tikzpicture}[baseline={([yshift=-.5ex]current bounding box.center)},scale=.5]
	\path[fill, color=\lightregion] (-2,-1) rectangle (2,1);
	\path[fill, color=\darkregion] (0, -1) rectangle (2, 1);
	\path[fill, color=\lightregion] (.25, 0) to [out=90, in=180] (1, .75) to [out=0, in=90] (1.75, 0) to [out=-90, in=0] (1, -.75) to [out=180, in=-90] (.25, 0);
\end{tikzpicture}\;,
	}	\\
	& {
\begin{tikzpicture}[baseline={([yshift=-.5ex]current bounding box.center)},scale=.5]
	\path[fill, color=\lightregion] (-2,-1) rectangle (2,1);
	\path[fill, color=\darkregion] (0, -1) rectangle (2, 1);
	\path[fill, color=\darkregion] (-1.75, 0) to [out=90, in=180] (-1, .75) to [out=0, in=90] (-.25, 0) to [out=-90, in=0] (-1, -.75) to [out=180, in=-90] (-1.75, 0);
\end{tikzpicture}
	}
	\arrow[""{name=0, anchor=center, inner sep=0}, "{f \cp{0} \pncp{ \ncstick{1} \ncstick{2} \ncpol{1} }}", from=1-1, to=1-3]
	\arrow["{\pncp{ \ncstick{1} \ncstick{2} } \cp{0} f}"', from=1-1, to=2-2]
	\arrow["{\pncp{ \nccup{1}{2} } \cp{1} \pncp{ \nccup{1}{2} \ncpol{2} }}"', from=2-2, to=1-3]
	\arrow["{\pncp{\nccup{1}{3} \ncstick[0.9]{2}}}"', between={0}{0.8}, Rightarrow, from=2-2, to=0]
\end{tikzcd}\]
encoding an evident coherence between the 3\nbd cells in its boundary.
However, we were not able, for now, to find an evident algebraic pattern in these extrapolations.
A possible path would be to try to generalise the construction of $1 E$ from the simplicial coskeleton $\cosk_0 \set{s, t}$, by finding some wider class of \emph{atoms} \cite{hadzihasanovic2024combinatorics}---perhaps generated by the oriented simplices under operations indexed by ``rightmost gap-insertions''---then consider the 0-coskeletal objects associated to a category of such cell shapes, as well as their realisation as strict $\omega$\nbd categories using the constructions of \cite[Section 5.2]{hadzihasanovic2024combinatorics}.
If this is possible, however, the operations are non-obvious, and so far our attempts have not been successful.

\bibliographystyle{alpha}
\small \bibliography{main}

@article{ebrahimi2020operads,
  title = {Operads of (noncrossing) partitions, interacting bialgebras, and moment-cumulant relations},
  author = {Ebrahimi-Fard, K. and Foissy, L. and Kock, J. and Patras, F.},
  journal = {Advances in Mathematics},
  volume = {369},
  pages = {107170},
  year = {2020},
}

@article{street1987algebra,
  title = {The algebra of oriented simplexes},
  author = {Street, R.},
  journal = {Journal of Pure and Applied Algebra},
  volume = {49},
  number = {3},
  pages = {283--335},
  year = {1987},
}

@article{steiner2004omega,
  title = {Omega-categories and chain complexes},
  author = {Steiner, R.},
  journal = {Homology, Homotopy and Applications},
  volume = {6},
  number = {1},
  pages = {175--200},
  year = {2004},
}

@article{hadzihasanovic2025model,
  title = {A model for the coherent walking $\omega$-equivalence},
  author = {Hadzihasanovic, A. and Loubaton, F. and Ozornova, V. and Rovelli, M.},
  journal={Proceedings of the American Mathematical Society},
  volume={153},
  number={07},
  pages={2813--2827},
  year={2025}
}

@book{hadzihasanovic2024combinatorics,
  title = {Combinatorics of higher-categorical diagrams},
  author = {Hadzihasanovic, A.},
  year = {2024},
  publisher = {Online preprint arXiv:2404.07273v2},
  note = {To appear in London Mathematical Society Lecture Note Series}
}

@book{goerss1999simplicial,
  title = {Simplicial Homotopy Theory},
  author = {Goerss, P.G. and Jardine, J.F.},
  series = {Progress in Mathematics},
  volume = {174},
  publisher = {Birkh\"auser},
  year = {1999},
}

@article{lafont2010folk,
  year = 2010,
  volume = {224},
  number = {3},
  pages = {1183--1231},
  author = {Lafont, Y. and M{\'{e}}tayer, F. and Worytkiewicz, K.},
  title = {A folk model structure on omega-cat},
  journal = {Advances in Mathematics}
}

@book{ara2020joint,
  title = {Joint et tranches pour les $\infty$-cat\'egories strictes},
  author = {Ara, D. and Maltsiniotis, G.},
  series = {M\'emoires de la Soci\'et\'e Math\'ematique de France},
  number = {165},
  publisher = {Soci\'et\'e Math\'ematique de France},
  year = {2020},
}

@book{ara2025polygraphs,
  series = {London Mathematical Society Lecture Note Series},
  title = {Polygraphs: From Rewriting to Higher Categories}, 
  publisher = {Cambridge University Press}, 
  author = {Ara, D. and Burroni, A. and Guiraud, Y. and Malbos, P. and M\'etayer, F. and Mimram, S.}, 
  year = {2025}, 
}

@article{metayer2003resolutions,
  title={Resolutions by polygraphs},
  author={M{\'e}tayer, F.},
  journal={Theory and Applications of Categories},
  volume={11},
  number={7},
  pages={148--184},
  year={2003}
}

@article{ozornova2024equivalence,
  title={What is an equivalence in a higher category?},
  author={Ozornova, V. and Rovelli, M.},
  journal={Bulletin of the London Mathematical Society},
  volume={56},
  number={1},
  pages={1--58},
  year={2024},
  publisher={Wiley Online Library}
}

@book{hinze2023introducing,
  title={Introducing string diagrams: the art of category theory},
  author={Hinze, R. and Marsden, D.},
  year={2023},
  publisher={Cambridge University Press}
}

@article{lack2004quillen,
  title={A {Q}uillen model structure for bicategories},
  author={Lack, S.},
  journal={K-theory},
  volume={33},
  number={3},
  pages={185--197},
  year={2004}
}

@article{gurski2012biequivalences,
  title={Biequivalences in tricategories},
  author={Gurski, N.},
  journal={Theory and Applications of Categories},
  volume={26},
  number={14},
  pages={349--384},
  year={2012}
}

@article{cheng2007omega,
  title = {An $\omega$-category with all duals is an $\omega$-groupoid},
  author = {Cheng, E.},
  journal = {Applied Categorical Structures},
  volume = {15},
  pages = {439--453},
  year = {2007},
  publisher = {Springer}
}

@article{fujii2024weakly,
  title={Weakly invertible cells in a weak $\omega$-category},
  author = {Fujii, S. and Hoshino, K. and Maehara, Y.},
  journal={Higher Structures},
  volume={8},
  number={2},
  pages={386--415},
  year={2024}
}

@unpublished{benjamin2024invertible,
  title = {Invertible cells in $\omega$-categories},
  author = {Benjamin, T. and Markakis, I.},
  note = {Online preprint arXiv:2406.12127},
  year = {2024}
}

@article{edelman1980chain,
  title={Chain enumeration and non-crossing partitions},
  author={Edelman, P.H.},
  journal={Discrete Mathematics},
  volume={31},
  number={2},
  pages={171--180},
  year={1980},
  publisher={Elsevier}
}

@article{chanavat2025equivalences,
  title={Equivalences in diagrammatic sets},
  author={Chanavat, C. and Hadzihasanovic, A.},
  journal={Journal of Pure and Applied Algebra},
  pages={108165},
  year={2025},
  publisher={Elsevier}
}

@article{kreweras1972partitions,
  title={Sur les partitions non crois{\'e}es d'un cycle},
  author={Kreweras, G.},
  journal={Discrete mathematics},
  volume={1},
  number={4},
  pages={333--350},
  year={1972},
  publisher={Elsevier}
}

\end{document}